\theoremstyle{thmstyleone}%
\newtheorem{thm}{Theorem}%
\newtheorem{proposition}[thm]{Proposition}%
\newtheorem{lem}[thm]{Lemma}%
\newtheorem{cor}[thm]{Corollary}%
\theoremstyle{thmstyletwo}%
\newtheorem{exa}{Example}%
\newtheorem{rk}{Remark}%
\newtheorem{property}{Property}%
\theoremstyle{thmstylethree}%
\newtheorem{dfn}{Definition}%
\def\Cl{\operatorname{Cl}}
\def\Int{\operatorname{Int}}
\def\conv{{\operatorname{conv}}}
\def\comp{{\operatorname{comp}}}
\begin{document}

\title[Curves in hyperspaces obtained by intersection of $r$-neighborhoods with a fixed subset]{Curves in hyperspaces obtained by intersection of $r$-neighborhoods with a fixed subset}

\author*[1]{\fnm{Arsen} \sur{Galstyan}}\email{ares.1995@mail.ru}
\equalcont{These authors contributed equally to this work.}

\author[2]{\fnm{Alexey} \sur{Tuzhilin}}\email{tuz@mech.math.msu.su}
\equalcont{These authors contributed equally to this work.}

\affil*[1]{\orgdiv{School of Mathematics}, \orgname{Harbin Institute of Technology}, \orgaddress{\street{92 West Dazhi St.}, \city{Harbin}, \postcode{150001}, \state{Heilongjiang}, \country{China}}}

\affil[2]{\orgdiv{Mechanics and Mathematics}, \orgname{Lomonosov Moscow State University}, \orgaddress{\street{1 Leninskiye Gory St.}, \city{Moscow}, \postcode{119991}, \country{Russian Federation}}}

\abstract{The present paper generalizes the result from one of the papers by Galstyan. Namely, we consider two nonempty subsets $A$ and $B$ of a metric space $X$, and construct one-parametric family $F_r$ of subsets obtained by intersection between $B$ and closed $r$-neighborhood of $A$, where $r$ is bigger than the infimum distance between the sets $A$ and $B$. In the case where $B$ is compact, we show that this intersection, considered as a mapping, is right semicontinuously on $r$ in the topology generated by Hausdorff distance. Moreover, if $A$ and $B$ are convex subsets of a normed space $X$, then we prove that $F_r$ depends continuously on $r$ in such topology if and only if the Hausdorff distance between different sets $F_r$ is finite. We also show that for normed spaces $X$ of dimension $2$ or less, the latter condition is automatically fulfilled. For dimension $3$ and hence for bigger ones, we construct an example in which the Hausdorff distance between different $F_r$ is always infinite.}

\keywords{hyperspace, Hausdorff distance, set-valued mapping, continuity, convex set, normed space}

\pacs[MSC Classification]{46B20, 51F99, 52A07, 52A40, 52A41, 46B50}

\maketitle

\section{Introduction}\label{sec1}
Let $A$ and $B$ be nonempty subsets of a metric space $X$. We put $F_r(A, B)=B_r(A)\cap B$, where $B_r(A)$ is the closed $r$-neighborhood of the set $A$ (see Definition~\ref{dfn:balls}). In this work, we study in which cases the mapping $F_r$ defines a curve in the hyperspace over $X$.

Here, the hyperspace over $X$ is understood as the set $\mathcal{P}_0(X)$ of all nonempty subsets of $X$, endowed with the Hausdorff distance $d_H$ (see Definition~\ref{distance_H}), and a curve in the hyperspace is a continuous mapping from an interval to this hyperspace.

In metric geometry, many standard functionals like the distance between points or from a point to a set, are $1$-Lipschitz, that guarantees their continuity. However, in our case, the mapping $F_r$ might not possess this property. Indeed, consider the case of the plane with the Euclidean norm. When $r$ is changed by a small positive value $\delta>0$, the Hausdorff distance between $F_r$ and $F_{r+\delta}$ may change by a significantly larger value than $\delta$, see Figure~\ref{Examp :image}.
\begin{figure}[h]
\centering
\includegraphics[width=0.8\textwidth]{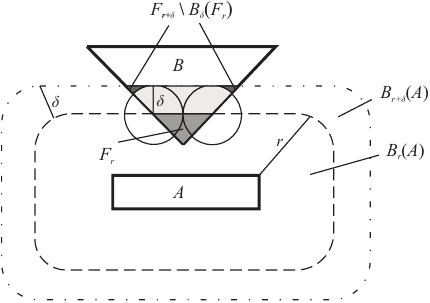}
\caption{An example where $d_H\bigl(F_r,F_{r+\delta}\bigr)>\delta$}\label{Examp :image}
\end{figure}

This figure shows two subsets $A$ and $B$. The first is a closed rectangle, the second is a closed triangle. In this example, $F_{r+\delta}\not\subset B_{\delta}\bigl(F_r\bigr)$, and thus $d_H\bigl(F_r,F_{r+\delta}\bigr)>\delta$. Moreover, the mapping $F_r$ can be discontinuous, and there are many different reasons of that, see Examples~\ref{ex:first_normed}, \ref{ex:second}, and~\ref{ex:infinity_distance}. However, it turns out that the mapping $F_r$ is right semicontinuous in an arbitrary metric space provided the subset $B$ is compact, see Theorem~\ref{thm:rightSemicont}. Notice that the compactness condition of $B$ is essential in Theorem~\ref{thm:rightSemicont} as it is illustrated in Examples~\ref{ex:first} and~\ref{ex:first_normed}. However, the compactness condition of $B$ does not imply left semicontinuity even in Euclidean plane, see Example~\ref{ex:second}. This example is based on the absence of convexity of the set $B$.

Assume now that $A$ and $B$ are convex subsets of a normed space $X$. Is it always true that $F_r$ depends continuously on $r$? We show that the only obstacle for that is the possibility of infinite distance between different $F_r$, see Theorem~\ref{cont_criterion}. Moreover, the infinity of this distance can really occur as we show in Example~\ref{ex:infinity_distance}. However, it is amazing that for $X$ of dimension $2$ or less, the distance between different $F_r$ is always finite, see Theorem~\ref{thm_lem_2}. Notice that Theorem~\ref{cont_criterion} follows from more strict Theorem~\ref{Lipschitz}: the finiteness condition implies that the $F_r$ is Lipschitz on each closed interval where the finiteness occurs.

Initially, the mapping $F_r$ arose as a byproduct of the theory of extremal networks in hyperspaces~\cite{Gals_1}. Many results in this theory use various deformations of sets of the form $B_r(A)\cap B$. However, the mapping $F$ turned out to be interesting in itself, so the proof of continuity of this mapping in the case of compact convex subsets of a finite-dimensional normed space was published separately~\cite{Gals_0}. The present paper is devoted to generalization of Theorem~3.2 from~\cite{Gals_0} to the cases of arbitrary metric spaces, arbitrary normed spaces, and for sets $A$ and $B$ of more general forms.

Note that the intersection of an $r$-neighborhood with a subset appear, for example, in~\cite[pp.~53--54]{Alimov}, whose author discusses under what circumstances the set $B_r(x)\cap B$ is a retract of the ball $B_r(x)$, where $x$ is a point in a real normed space.

On one hand, as we mentioned above, the present work relates to the theory of extremal networks, which constitutes a large part of metric geometry~\cite{Burago}. This theory investigates various variational problems, which have interested specialists for centuries. A detailed historical overview and a summary of modern results can be found in~\cite{Branching, Cieslik, Review, Hwang}.

On the other hand, this article investigates the properties of mappings in hyperspaces. The theory of hyperspaces dates back to the works of F.~Hausdorff and L.~Vietoris at the beginning of the 20th century. This theory gained popularity due to the paper~\cite{Kelley} by J.~Kelley. A historical excursion on the theory of hyperspaces can be found, for example, in~\cite{Nadler}. Mappings to hyperspaces are also called set-valued mappings. The properties of such mappings are studied by a branch of mathematics called set-valued analysis~\cite{SetAn}.

The present work also investigates the geometric structure of convex sets. Such sets and their various transformations arise in functional analysis, probability theory, information theory, and linear programming. Indeed, any system of linear inequalities in $\mathbb{R}^n$ generates some convex (possibly non-compact) polyhedron. Such systems are widely used, for example, in optimization problems, from planning and organizing production~\cite{Kant} to scheduling the work of computer processors~\cite{Drag}.

Section~\ref{Definitions} provides all the necessary definitions and auxiliary statements used in this work. Section~\ref{sec:cont_metric} investigates the continuity of the mapping $F_r$ in a metric space $X$. In Section~\ref{sec:cont_normed}, continuity criterion of $F_r$ in a real normed space is derived, see Theorem~\ref{cont_criterion}. Example~\ref{ex:infinity_distance} from Section~\ref{sec:finite_dist} shows that in Theorem~\ref{Lipschitz} and its applications such as Corollary~\ref{cont} and Theorem~\ref{cont_criterion}, the requirement of finiteness of distances between the images of $F_r$ is, in general, essential. Also, Example~\ref{ex:second} from Section~\ref{sec:cont_metric} indicates that, in theorems of Section~\ref{sec:cont_normed}, the convexity condition cannot be omitted. Finally, Section~\ref{sec:finite_dist} proves Theorem~\ref{thm_lem_2} that in real normed spaces of dimension not greater than $2$, the convexity of $A$ and $B$ implies $d_H\bigl(F_{r_1},F_{r_2}\bigr)<\infty$ for any nonempty $F_{r_1}$ and $F_{r_2}$. Therefore, in such spaces, the finiteness of distances between the images of $F_r$ can be omitted as a separate requirement, see Corollary~\ref{cor:without_fin_dist}.

The main results of this article are Theorems~\ref{thm:rightSemicont},~\ref{Lipschitz},~\ref{cont_criterion}, and~\ref{thm_lem_2}, as well as Example~\ref{ex:infinity_distance}.

\section{Results}\label{sec2}

For convenience, in the case of a metric space $(X,\rho)$, the distance between two points $a,b\in X$ will be denoted by $|a\,b|$ instead of $\rho(a, b)$. Also, instead of $(X,\rho)$, we will simply write $X$.

In the case of a linear space $X$ over the field $\mathbb{R}$, for any two points $a,b\in X$, the following standard notations will be used in many places in the text:
\begin{gather*}
(a,b):=\bigl\{(1-\lambda)a+\lambda b:\lambda\in(0,1)\bigr\}; \\
(a,b]:=\bigl\{(1-\lambda)a+\lambda b:\lambda\in(0,1]\bigr\}; \\
[a,b):=\bigl\{(1-\lambda)a+\lambda b:\lambda\in[0,1)\bigr\}; \\
[a,b]:=\bigl\{(1-\lambda)a+\lambda b:\lambda\in[0,1]\bigr\}.
\end{gather*}

\subsection{Necessary definitions and statements}\label{Definitions}

\subsubsection{Accessibility Lemma}
The following lemma can be found, for example, in~\cite[Lemma 5.1.3]{CalTech}.

\begin{lem}\label{lem:Access}
Let $X$ be a topological real linear space and $M\subset X$ be convex. Then for any $x\in\Int M$ and $y\in\Cl M$, we have $[x,y)\subset\Int M$.
\end{lem}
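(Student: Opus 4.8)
The plan is to reduce immediately to the case $\lambda\in(0,1)$, since for $\lambda=0$ the point $(1-\lambda)x+\lambda y$ is just $x\in\Int M$. Fixing such a $\lambda$, set $z=(1-\lambda)x+\lambda y$; the goal becomes producing an explicit open neighborhood of $z$ contained in $M$, which certifies $z\in\Int M$. First I would extract from the hypotheses the two basic facts I will use: since $x\in\Int M$, there is an open neighborhood $U$ of the origin with $x+U\subseteq M$, and I may take $U$ symmetric (replacing it by $U\cap(-U)$); and since $y\in\Cl M$, every neighborhood of $y$ meets $M$.

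The geometric idea is that a point $w$ obtained from $z$ by a small displacement should be expressible as a convex combination $(1-\lambda)(x+u)+\lambda y'$, where $x+u\in M$ is a point near $x$ coming from the interior neighborhood and $y'\in M$ is a point near $y$ supplied by the closure condition; convexity of $M$ then immediately yields $w\in M$. The point is that we never use $y$ itself (which may lie outside $M$): it is only approximated by $y'\in M$, and the slack afforded by $x$ being an interior point absorbs the error $y-y'$.

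To make this quantitative I would first invoke continuity of addition at the origin to choose an open neighborhood $U_1$ of $0$ with $U_1+U_1\subseteq U$. Because $\lambda$ and $1-\lambda$ are nonzero, multiplication by either is a homeomorphism of $X$, so $\tfrac{1-\lambda}{\lambda}U_1$ is again a neighborhood of $0$ and $z+(1-\lambda)U_1$ is an open neighborhood of $z$. Using $y\in\Cl M$ together with the symmetry of $U_1$, I pick $y'\in M$ with $y-y'\in\tfrac{1-\lambda}{\lambda}U_1$. Then for an arbitrary $w=z+(1-\lambda)u_1$ with $u_1\in U_1$, solving $w=(1-\lambda)(x+u)+\lambda y'$ gives $u=u_1+\tfrac{\lambda}{1-\lambda}(y-y')\in U_1+U_1\subseteq U$, so $x+u\in M$; convexity then places $w$ in $M$. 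As $w$ ranges over the whole neighborhood $z+(1-\lambda)U_1$, this shows that neighborhood lies in $M$, completing the argument.

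The main obstacle, and the only genuinely delicate point, is this bookkeeping with neighborhoods of the origin: one must split the available ``room'' $U$ into two halves via $U_1+U_1\subseteq U$, devoting one half to the freedom of moving $w$ around $z$ and the other to the approximation error $y-y'$, while keeping track of the scalar factors $\lambda$ and $1-\lambda$ that rescale these neighborhoods. Once the symmetric neighborhood $U$, the additive refinement $U_1$, and the approximating point $y'$ are chosen in this coordinated way, the convexity step and the openness of $z+(1-\lambda)U_1$ are routine.
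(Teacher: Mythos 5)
Your proof is correct, and it is in fact more careful than the paper's own argument. The paper proceeds geometrically: take an open neighborhood $U^x\subset\Int M$ of $x$, form the cone $C$ with vertex $y$ and base $U^x$, assert that $C$ lies in $M$ by convexity, and observe that $C\setminus\{y\}\subset\Int M$. As written, this glosses over exactly the point you isolate: when $y\in\Cl M\setminus M$, the segments from $U^x$ to $y$ are not convex combinations of points of $M$, so ``$C\subset M$'' does not follow from convexity alone; one must replace $y$ by nearby points $y'\in M$, which is precisely your approximation step. Your version supplies the standard topological-vector-space bookkeeping that makes this rigorous: a neighborhood $U$ of $0$ with $x+U\subseteq M$, an additive refinement $U_1+U_1\subseteq U$, a point $y'\in M$ with $y-y'\in\frac{1-\lambda}{\lambda}U_1$, and the identity $u=u_1+\frac{\lambda}{1-\lambda}(y-y')\in U_1+U_1\subseteq U$, which exhibits the open neighborhood $z+(1-\lambda)U_1\subseteq M$; the scalar rescalings are legitimate since multiplication by the nonzero scalars $\lambda$ and $1-\lambda$ is a homeomorphism. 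One cosmetic slip: you symmetrize $U$ but later invoke ``the symmetry of $U_1$''; the fix is simply to replace $U_1$ by $U_1\cap(-U_1)$, after which the symmetry of $U$ is not needed at all. In exchange for its length, your argument buys full rigor in an arbitrary topological real linear space, whereas the paper's cone picture conveys the same underlying idea --- the interior room at $x$ absorbs the defect of $y$ --- but would need your $y'$-approximation to be complete.
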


\begin{proof}
Since $x\in\Int M$, there exists an open neighborhood $U^x$ of $x$ such that $U^x\subset\Int M$. Since $M$ is convex, the cone $C$ with the vertex $y$ and the base $U^x$ belongs to $M$. Clearly that $C\setminus\{y\}$ is contained in $\Int M$, which proves the lemma.
\end{proof}

\subsubsection{Intersection of a segment with the boundary of a set}
\begin{lem}\label{lem:top}
Let $M$ be a nonempty subset of a connected topological space $X$, not coinciding with $X$. Then $\partial M\ne\emptyset$.
\end{lem}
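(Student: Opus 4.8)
The plan is to argue by contradiction, reducing the claim to the standard characterization of connectedness in terms of clopen sets. Recall that the boundary admits the description $\partial M=\Cl M\setminus\Int M$. The strategy is to show that the vanishing of $\partial M$ forces $M$ to be simultaneously open and closed, which a connected space cannot tolerate for a nonempty proper subset.

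First I would suppose, for contradiction, that $\partial M=\emptyset$. Since the inclusions $\Int M\subseteq M\subseteq\Cl M$ always hold, the assumption $\Cl M\setminus\Int M=\emptyset$ gives $\Cl M=\Int M$, and squeezing $M$ between these two equal sets yields $\Int M=M=\Cl M$. In particular $M$ is open (it equals its interior) and closed (it equals its closure); that is, $M$ is a clopen subset of $X$.

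Next I would invoke connectedness directly. By hypothesis $X$ is connected, so its only clopen subsets are $\emptyset$ and $X$. But $M$ is nonempty and $M\neq X$ by assumption, so $M$ is a proper nonempty clopen subset, a contradiction. Hence $\partial M\neq\emptyset$.

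I do not anticipate any genuine obstacle, since the argument is entirely formal once the boundary is expressed through closure and interior. The only point requiring a modicum of care is to use \emph{both} inclusions $\Int M\subseteq M$ and $M\subseteq\Cl M$ to collapse the chain to a single set, rather than merely concluding $\Cl M=\Int M$. As a variant I might instead use the equivalent description $\partial M=\Cl M\cap\Cl(X\setminus M)$: its emptiness makes $\Cl M$ and $\Cl(X\setminus M)$ a partition of $X$ into two disjoint closed sets whose union is all of $X$, both nonempty because $M$ and its complement are nonempty, again contradicting connectedness.
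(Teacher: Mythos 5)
Your proof is correct and follows essentially the same route as the paper: both argue by contradiction that $\partial M=\emptyset$ forces $M$ to be clopen (the paper phrases this as $M=\Int M$ and $X\setminus M$ both being open and nonempty), contradicting the connectedness of $X$. Your closing variant via $\partial M=\Cl M\cap\Cl(X\setminus M)$ is a cosmetic rephrasing of the same idea.
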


\begin{proof}
If $\partial M=\emptyset$ then $M=\Int M$ is open and nonempty, and the same is true for $X\setminus M$, that contradicts to connectivity of $X$.
\end{proof}

We will also need the following

\begin{lem}[{\cite[Lemma~3]{Wills}}]\label{light}
Let $B$ be a nonempty subset of a real normed space $X$ such that $X\setminus\Int B\ne\emptyset$. Then for any $a\in X\setminus\Int B$ and $b\in \Cl B$ we have $[a,b]\cap\partial B\ne\emptyset$.
\end{lem}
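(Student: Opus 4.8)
The plan is to reduce the statement to a one-dimensional observation along the segment $[a,b]$ and exploit its connectedness. I would parametrize the segment by the continuous map $\gamma\colon[0,1]\to X$, $\gamma(t)=(1-t)a+tb$, so that $\gamma(0)=a$ and $\gamma(1)=b$. Intuitively, one walks along the segment from $b$ toward $a$ and stops at the first moment the path leaves $\Cl B$; that stopping point should lie on $\partial B$.

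Concretely, I would consider the set $T=\{t\in[0,1]:\gamma(t)\in\Cl B\}$. It is nonempty, since $b=\gamma(1)\in\Cl B$ gives $1\in T$, and it is closed in $[0,1]$, because $\Cl B$ is closed and $\gamma$ is continuous. Hence $t_0:=\inf T$ belongs to $T$, and I set $p:=\gamma(t_0)\in[a,b]\cap\Cl B$. It then remains to show that $p\notin\Int B$, which together with $p\in\Cl B$ yields $p\in\Cl B\setminus\Int B=\partial B$ and finishes the proof.

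To see that $p\notin\Int B$, I would distinguish two cases. If $t_0=0$, then $p=a$, and $a\notin\Int B$ holds by hypothesis. If $t_0>0$, then for every $t\in[0,t_0)$ we have $\gamma(t)\notin\Cl B$, hence a fortiori $\gamma(t)\notin\Int B$, i.e.\ $\gamma(t)\in X\setminus\Int B$. Since $\Int B$ is open, the set $X\setminus\Int B$ is closed; letting $t\to t_0^-$ and using continuity of $\gamma$ gives $p=\gamma(t_0)\in X\setminus\Int B$, as desired.

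I do not expect a genuine obstacle here: the statement is a connectedness argument along the segment, and the only real work lies in correctly identifying the stopping point $p$ as a boundary point. The points requiring care are the boundary case $t_0=0$ (handled by the hypothesis $a\in X\setminus\Int B$) and the passage to the limit, for which one must use that $X\setminus\Int B$ is closed rather than working directly with $X\setminus B$. Alternatively, the same conclusion follows by writing $[a,b]\setminus\partial B=\bigl([a,b]\cap\Int B\bigr)\sqcup\bigl([a,b]\setminus\Cl B\bigr)$ as a disjoint union of two relatively open subsets and invoking connectedness of the segment (in the spirit of Lemma~\ref{lem:top}): were the intersection $[a,b]\cap\partial B$ empty, connectedness would force the whole segment into $\Int B$ or into the complement of $\Cl B$, contradicting respectively $a\notin\Int B$ and $b\in\Cl B$.
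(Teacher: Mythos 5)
Your proof is correct. Note that the paper itself gives no proof of this lemma---it is quoted with a citation to \cite[Lemma~3]{Wills}---so there is no in-text argument to compare against; your write-up is a valid self-contained substitute. Both of your variants work: the first-hitting-time argument is airtight (the set $T=\{t:\gamma(t)\in\Cl B\}$ is a nonempty closed subset of $[0,1]$, so $t_0=\inf T$ is attained; the case split at $t_0=0$ correctly invokes the hypothesis $a\notin\Int B$, and for $t_0>0$ you rightly pass to the limit inside the closed set $X\setminus\Int B$ rather than $X\setminus B$, which is the one place a careless version would break). The degenerate case $a=b$ is also covered automatically, since then $a\in\Cl B\setminus\Int B=\partial B$. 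Your alternative argument---splitting $[a,b]\setminus\partial B$ into the two disjoint relatively open pieces $[a,b]\cap\Int B$ and $[a,b]\setminus\Cl B$ and using connectedness of the segment---is the standard textbook proof and is stylistically closest to the paper's own Lemma~\ref{lem:top}, which runs the same open-partition argument for an ambient connected space; it is also slightly more economical, as it needs no parametrization or limit. Either version would be acceptable; nothing is missing.
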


\subsubsection{Definition and properties of $r$-neighborhoods}
Most statements in this subsection are well known, see for example~\cite{TuzLections}.

\begin{dfn}\label{dfn:one}
For any subset $A$ of a metric space $X$ and any point $p\in X$, the \textit{distance from $p$ to $A$} is the value
$$
|p\,A|=\inf\bigl\{|p\,a|:a\in A\bigr\}.
$$
In particular, for empty $A$ we have $|p\,A|=\infty$.
\end{dfn}

\begin{rk}
It is well known that for a nonempty subset $A$ of a metric space $X$, the mapping $x\mapsto|x\,A|$ is continuous.
\end{rk}

\begin{dfn}\label{dfn:balls}
Let $A$ be a subset of a metric space $X$ and $0\le r<\infty$. The subsets
$$
B_r(A)=\{p\in X:|p\,A|\le r\},\ \ U_r(A)=\{p\in X:|p\,A|<r\}
$$
are called, respectively, the \emph{closed\/} and the \emph{open ball with center at $A$ and radius $r$} or the \emph{closed\/} and the \emph{open $r$-neighborhood of $A$}.
\end{dfn}

\begin{rk}\label{rk:2}
According to Definition~$\ref{dfn:one}$, for any $0\le r<\infty$,
$$
B_r(\emptyset)=U_r(\emptyset)=\emptyset.
$$
\end{rk}

In the case $A = \{a\}$, the notations $B_r\bigl(\{a\}\bigr)$ and $U_r\bigl(\{a\}\bigr)$ will be replaced for brevity by $B_r(a)$ and $U_r(a)$, respectively.

\begin{lem}\label{lem:Closed}
Let $A$ be a subset of a metric space $X$. Then for any $0\le r<\infty$, the set $B_r(A)$ is closed, and $B_0(A)=\Cl A$.
\end{lem}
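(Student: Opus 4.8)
The plan is to treat the two assertions separately, in each case first disposing of the degenerate case $A=\emptyset$ and then arguing for nonempty $A$. For $A=\emptyset$, Remark~\ref{rk:2} gives $B_r(\emptyset)=\emptyset$ for every $r$, and the empty set is both closed and equal to $\Cl\emptyset$, so both claims hold trivially. Hence from now on I would assume $A\ne\emptyset$, so that the distance function $p\mapsto|p\,A|$ is real-valued and, by the continuity remark stated just after Definition~\ref{dfn:one}, continuous on $X$.

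For closedness, the quickest route is to observe that, by Definition~\ref{dfn:balls}, $B_r(A)=\{p\in X:|p\,A|\le r\}=f^{-1}\bigl([0,r]\bigr)$, where $f(p)=|p\,A|$. Since $f$ is continuous and $[0,r]$ is closed in $[0,\infty)$, its preimage is closed in $X$. If one prefers to avoid invoking continuity, the same conclusion follows directly: given a sequence $p_n\in B_r(A)$ with $p_n\to p$, the triangle inequality yields $|p\,A|\le|p\,p_n|+|p_n\,A|\le|p\,p_n|+r$, and letting $n\to\infty$ gives $|p\,A|\le r$, so $p\in B_r(A)$; as $X$ is metric, sequential closedness suffices.

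For the identity $B_0(A)=\Cl A$, I would start from the fact that distances are nonnegative, so $B_0(A)=\{p\in X:|p\,A|\le 0\}=\{p\in X:|p\,A|=0\}$. It then remains to recognize $\{p:|p\,A|=0\}$ as the closure of $A$. By Definition~\ref{dfn:one}, $|p\,A|=0$ means $\inf_{a\in A}|p\,a|=0$, i.e.\ for every $\varepsilon>0$ there is $a\in A$ with $|p\,a|<\varepsilon$; this is exactly the statement that every open ball about $p$ meets $A$, which in a metric space characterizes membership in $\Cl A$. Chaining these equivalences gives $B_0(A)=\Cl A$.

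There is no serious obstacle here: the argument is a direct unwinding of the definitions, and the only point requiring a little care is the bookkeeping for the empty set together with the standard metric-space characterization of the closure via the vanishing of the distance function. As a sanity check, the empty-set case is consistent with this characterization, since $|p\,\emptyset|=\infty\ne0$ for all $p$, giving $B_0(\emptyset)=\emptyset=\Cl\emptyset$.
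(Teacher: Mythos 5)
Your proof is correct and follows essentially the same route as the paper: the empty case is dispatched via Remark~\ref{rk:2}, and for nonempty $A$ closedness follows from continuity of $x\mapsto|x\,A|$, since $B_r(A)$ is the preimage of the closed set $[0,r]$. You simply spell out what the paper leaves implicit --- the sequential alternative and the identification $B_0(A)=\{p:|p\,A|=0\}=\Cl A$ via the standard characterization of closure --- and both of these details are accurate.
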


\begin{proof}
If $A=\emptyset$, then according to Remark~\ref{rk:2}, $B_r(A)=\emptyset$ is closed. For $A\ne\emptyset$, it follows immediately from continuity of the function $x\mapsto|x\,A|$.
\end{proof}

\begin{rk}
Notice that $\cup_{a\in A}B_r(a)\subset B_r(A)$, although equality is not always achieved. For instance, let $A=\{1/n\}_{n\in\mathbb{N}}\subset\{-1\}\cup\{1/n\}_{n\in\mathbb{N}}=:X\subset\mathbb{R}$, then $B_1(A)=X$, but $\cup_{a\in A}B_1(a)=X\setminus\{-1\}$.
\end{rk}

\begin{lem}\label{lem:Union}
Let $A$ be a subset of a metric space $X$. Then for any $0<r<\infty$, we have $U_r(A)=\bigcup\limits_{a\in A}U_r(a)$.
\end{lem}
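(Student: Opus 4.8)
The plan is to prove the two inclusions separately, noting that the essential content lies entirely in the strict inequality appearing in the definition of $U_r$. Before anything else I would dispose of the degenerate case $A=\emptyset$: here $U_r(A)=\emptyset$ by Remark~\ref{rk:2}, while the union over the empty index set is also empty, so equality holds trivially. For the remainder I assume $A\ne\emptyset$.

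The inclusion $\bigcup_{a\in A}U_r(a)\subset U_r(A)$ is the easy half. Fix $a\in A$ and $p\in U_r(a)$, so that $|p\,a|<r$. Since $a$ itself is an element of $A$, it participates in the infimum defining the distance to $A$, whence $|p\,A|=\inf\{|p\,a'|:a'\in A\}\le|p\,a|<r$, and therefore $p\in U_r(A)$. Taking the union over all $a\in A$ yields the claimed inclusion.

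For the reverse inclusion $U_r(A)\subset\bigcup_{a\in A}U_r(a)$, take $p\in U_r(A)$, that is, $|p\,A|<r$. The key step is the elementary property of the infimum: since $\inf\{|p\,a|:a\in A\}<r$, the number $r$ fails to be a lower bound for the set $\{|p\,a|:a\in A\}$, so there must exist some $a\in A$ with $|p\,a|<r$. For this particular $a$ we have $p\in U_r(a)$, hence $p\in\bigcup_{a\in A}U_r(a)$, as desired.

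I do not expect any genuine obstacle here; the proof is a direct unwinding of the definitions. The only point worth flagging is precisely the use of the strict inequality, which is exactly why the statement holds for the open neighborhood $U_r$ but fails for the closed one $B_r$ (as the preceding Remark illustrates): an infimum strictly below $r$ is necessarily beaten by some actual element of the set, whereas an infimum equal to $r$ need not be attained by any element.
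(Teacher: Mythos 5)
Your proof is correct and is essentially the paper's argument: both handle $A=\emptyset$ via Remark~\ref{rk:2} and then exploit the fact that an infimum strictly below $r$ must be beaten by an actual element of $A$; you merely unwind into two inclusions what the paper writes as a single chain of equivalences.
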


\begin{proof}
If $A=\emptyset$, then according to Remark~\ref{rk:2}, $U_r(A)=\emptyset$. On the other hand, $\bigcup\limits_{a\in\emptyset}U_r(a)=\emptyset$.

For nonempty $A$, we have
\begin{multline*}
x\in U_r(A)\Leftrightarrow|x\,A|=\inf_{a\in A}|x\,a|<r\Leftrightarrow\exists a\in A,|x\,a|<r\Leftrightarrow\\
\exists a\in A, x\in U_r(a)\Leftrightarrow x\in\cup_{a\in A}U_r(a),
\end{multline*}
which completes the proof.
\end{proof}

From Lemma~\ref{lem:Union} we get

\begin{cor}\label{lem:Opened}
For any subset $A$ of a metric space $X$ and any $0<r<\infty$, the set $U_r(A)$ is open.
\end{cor}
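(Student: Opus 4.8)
The plan is to reduce the corollary entirely to Lemma~\ref{lem:Union}, which already does the substantive work. That lemma gives the decomposition $U_r(A)=\bigcup_{a\in A}U_r(a)$ for every $0<r<\infty$, so it suffices to verify two things: that each single-point ball $U_r(a)$ is open, and that an arbitrary union of open sets is open (a standard axiom of the metric, hence topological, structure on $X$). The degenerate case $A=\emptyset$ needs no special treatment beyond noting that then $U_r(A)=\emptyset$ is open; indeed Lemma~\ref{lem:Union} covers it, since an empty union is empty.

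To establish openness of a single-point ball, I would take an arbitrary $x\in U_r(a)=\{y\in X:|y\,a|<r\}$ and set $\varepsilon:=r-|x\,a|$, which is strictly positive by the defining inequality. Then for any $y\in U_\varepsilon(x)$ the triangle inequality yields $|y\,a|\le|y\,x|+|x\,a|<\varepsilon+|x\,a|=r$, so $y\in U_r(a)$. Hence $U_\varepsilon(x)\subset U_r(a)$, and since $x$ was arbitrary, $U_r(a)$ is open. Combining this with the union formula and the stability of open sets under arbitrary unions completes the argument.

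There is essentially no obstacle here: the only mild point worth stating explicitly is that the equality in Lemma~\ref{lem:Union} genuinely requires the strict radius $r>0$, which is exactly the hypothesis of the corollary, so the passage from the union to openness is clean. All remaining content is the elementary fact that open balls in a metric space are open, carried by a one-line triangle-inequality estimate.
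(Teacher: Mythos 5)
Your proof is correct and follows exactly the route the paper intends: the paper derives this corollary directly from Lemma~\ref{lem:Union} (the decomposition $U_r(A)=\bigcup_{a\in A}U_r(a)$), leaving the openness of single-point balls and stability of open sets under unions implicit, which you merely spell out. No gap, no divergence.
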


\begin{lem}\label{lem:Opened_in_Closed}
Let $A$ be a nonempty convex subset of a real normed space $X$. Then for any $0<r<\infty$, $U_r(A)=\Int B_r(A)$.
\end{lem}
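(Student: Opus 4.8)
The statement splits into the two inclusions $U_r(A)\subseteq\Int B_r(A)$ and $\Int B_r(A)\subseteq U_r(A)$. The first is immediate: by Corollary~\ref{lem:Opened} the set $U_r(A)$ is open, and since $|x\,A|<r$ implies $|x\,A|\le r$ we have $U_r(A)\subseteq B_r(A)$; as $\Int B_r(A)$ is the largest open subset of $B_r(A)$, it follows that $U_r(A)\subseteq\Int B_r(A)$. All the work is in the reverse inclusion, and the plan is to exploit convexity through the auxiliary function $d_A\colon x\mapsto|x\,A|$. The first step is to record that $d_A$ is convex on $X$: given $x_1,x_2\in X$, $\lambda\in[0,1]$, and $\varepsilon>0$, one picks $a_1,a_2\in A$ with $|x_i\,a_i|<d_A(x_i)+\varepsilon$; convexity of $A$ puts $a_\lambda=(1-\lambda)a_1+\lambda a_2$ in $A$, and the triangle inequality for the norm gives $\bigl|\,\bigl((1-\lambda)x_1+\lambda x_2\bigr)\,a_\lambda\bigr|\le(1-\lambda)|x_1\,a_1|+\lambda|x_2\,a_2|<(1-\lambda)d_A(x_1)+\lambda d_A(x_2)+\varepsilon$, so letting $\varepsilon\to0$ yields convexity. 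In particular $B_r(A)=\{d_A\le r\}$ is convex, and, since $d_A\equiv0$ on the nonempty set $A$ and $r>0$, the set $U_r(A)=\{d_A<r\}$ is nonempty and contains $A$.

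For the reverse inclusion I would take $x\in\Int B_r(A)$ and aim to show $d_A(x)<r$. Fix any $a_0\in A$, so $d_A(a_0)=0$. If $x\in A$ then $d_A(x)=0<r$ and we are done; otherwise $x\ne a_0$, and because $x$ is interior there is $\delta>0$ with $U_\delta(x)\subseteq B_r(A)$. Set $y=x+\tfrac{\delta}{2}\,\tfrac{x-a_0}{|x\,a_0|}$, which lies in $U_\delta(x)\subseteq B_r(A)$, so $d_A(y)\le r$. Writing $y=(1+t)x-t\,a_0$ with $t=\tfrac{\delta}{2|x\,a_0|}>0$ shows that $x$ lies strictly between $a_0$ and $y$, namely $x=(1-s)a_0+s\,y$ with $s=\tfrac{1}{1+t}\in(0,1)$. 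Convexity of $d_A$ then gives $d_A(x)\le(1-s)\,d_A(a_0)+s\,d_A(y)\le s\,r<r$, so $x\in U_r(A)$. (Here I use $a_0\in A$ as the base point precisely so that the term $d_A(a_0)$ vanishes; one could instead phrase the extension step through the Accessibility Lemma~\ref{lem:Access} applied to the convex set $B_r(A)$, but the direct convex-combination estimate is shorter.)

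The step I expect to be the main obstacle, and the reason a purely geometric ``push away from the nearest point'' argument is risky, is that the infimum defining $|x\,A|$ need not be attained when $A$ fails to be closed or boundedly compact: there may be no genuine nearest point of $A$ to $x$ to move away from. The convexity argument is designed to sidestep this, since it only ever uses \emph{near}-optimal points $a_i$ together with the triangle inequality, never an actual projection. Thus the crux is really the clean proof of convexity of $d_A$ and the correct bookkeeping that places $x$ strictly inside a segment $[a_0,y]$ contained in $B_r(A)$; once these are in hand, strictness of the resulting estimate $d_A(x)<r$ follows from $s<1$, completing the inclusion $\Int B_r(A)\subseteq U_r(A)$ and hence the equality.
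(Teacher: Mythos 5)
Your proof is correct, and it reaches the conclusion by a genuinely different decomposition than the paper's. The paper argues by contradiction: assuming $x\in\Int B_r(A)$ with $|x\,A|=r$, it extends a segment $[a,x]$ (with $a\in A$ chosen \emph{near-optimal} for $x$, $|x\,a|=r+\alpha$) past $x$ by a distance $s$ to a point $y\in B_r(A)$, applies the homothety with center $a$ and ratio $k=|x\,a|/|y\,a|$ to a second near-optimal point $b$ for $y$, and obtains $c\in A$ with $|x\,c|\le\frac{r+\alpha}{r+\alpha+s}(r+\beta)$, concluding by letting $\alpha,\beta\to0$. You instead factor everything through a stronger auxiliary fact the paper never states: the convexity of the distance function $d_A(x)=|x\,A|$ in $x$ (the paper's Lemma~\ref{lem:convexity} records only the weaker consequence that the sublevel sets $B_r(A)$, $U_r(A)$ are convex, and its Proposition~\ref{conv_func} concerns convexity in $r$, not in $x$). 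The geometric kernel is the same in both proofs --- push past $x$ along a ray from a point of $A$ while staying in $B_r(A)$, then pull back by a homothety centered in $A$; indeed your convex combination $x=(1-s)a_0+s\,y$ \emph{is} the paper's homothety with center $a_0$ and ratio $s$. But your choice of base point $a_0\in A$ makes the term $(1-s)\,d_A(a_0)$ vanish identically, which is why you need no near-optimality for the homothety center, no contradiction, and no double limit: $d_A(y)\le r$ holds exactly since $y\in U_\delta(x)\subseteq B_r(A)$, and strictness of $d_A(x)\le s\,r<r$ comes solely from $s=\frac{1}{1+t}<1$ (with $t=\frac{\delta}{2|x\,a_0|}>0$) and $r>0$, uniformly in how close $s$ is to $1$. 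Your bookkeeping is sound --- the identity $x=(1-s)a_0+s\,y$ checks out, the degenerate case $x\in A$ is split off so there is no division by zero, and your observation that only near-optimal points (never an attained projection) are used is exactly right, since $A$ need not be closed. What the paper's route buys is avoiding a separate lemma; what yours buys is a reusable convexity statement and a direct, cleaner argument in which all $\varepsilon$-bookkeeping is quarantined inside the proof of convexity of $d_A$.
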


\begin{proof}
According to Definition~\ref{dfn:balls} and Corollary~\ref{lem:Opened}, $U_r(A)\subset\Int B_r(A)$. Assume $\Int B_r(A)\setminus U_r(A)\ne\emptyset$ and take an arbitrary $x\in\Int B_r(A)\setminus U_r(A)$, then $|x\,A|=r$. Since $x\in\Int B_r(A)$, there exists $s>0$ such that $B_s(x)\subset B_r(A)$. Take an arbitrary $a\in A$, then the segment $[a,x]$ is not degenerate. Extend the segment over the point $x$ up to some $y$ such that $|x\,y|=s$. Then $y\in B_s(x)\subset B_r(A)$ and $[a,y]\subset B_r(A)$ because $B_r(A)$ is convex by Lemma~\ref{lem:convexity}. In addition, $|y\,a|=|x\,a|+s$. Consider an arbitrary $b\in A$, then $[a,b]\subset A$ because $A$ is convex.

Let $f$ be the homothety with center at $a$ and coefficient $k=|x\,a|/|y\,a|<1$. Then $f(y)=x$ and we put $c=f(b)$. Since $[a,b]\subset A$ and $0<k<1$, we have $c\in A$. Notice that we can choose $a$ and $b$ independently and such that $|x\,a|=r+\alpha$, where $\alpha$ is non-negative and arbitrary close to $0$, and for any $\beta>0$ one can choose $b$ such that $|y\,b|\in[0,r+\beta)$. We get
$$
|x\,c|=k\,|y\,b|=\frac{r+\alpha}{r+\alpha+s}\,|y\,b|\le\frac{r+\alpha}{r+\alpha+s}\,(r+\beta).
$$
For $\alpha\to0$ and $\beta\to0$, the right hand side of the previous equation tends to $\frac r{r+s}\,r<r$, hence we can choose $\alpha$ and $\beta$ such that $|x\,c|<r$, a contradiction with $|x\,A|=r$.
\end{proof}

\begin{lem}\label{lem:convexity}
Let $A$ be a nonempty convex subset of a real normed space $X$. Then for $0\le r<\infty$, the sets $B_r(A)$ and $U_r(A)$ are convex.
\end{lem}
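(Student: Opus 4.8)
The plan is to verify convexity directly from the definition of the balls: given two points of the ball and a parameter $\lambda\in[0,1]$, I will show that the corresponding convex combination again lies in the ball. The engine of the argument is that convexity of $A$ lets me average two (nearly) nearest points of $A$, while translation invariance and positive homogeneity of the norm, together with the triangle inequality, control the distance from the averaged point.

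Concretely, fix $p,q\in B_r(A)$ and $\lambda\in[0,1]$, and set $p_\lambda=(1-\lambda)p+\lambda q$. Since $|p\,A|\le r$ and $|q\,A|\le r$, for every $\varepsilon>0$ I can choose $a,b\in A$ with $|p\,a|<r+\varepsilon$ and $|q\,b|<r+\varepsilon$. By convexity of $A$ the point $c:=(1-\lambda)a+\lambda b$ belongs to $A$, and then
\[
|p_\lambda\,c|=\bigl\|(1-\lambda)(p-a)+\lambda(q-b)\bigr\|\le(1-\lambda)|p\,a|+\lambda|q\,b|<r+\varepsilon .
\]
Hence $|p_\lambda\,A|\le|p_\lambda\,c|<r+\varepsilon$, and letting $\varepsilon\to0$ yields $|p_\lambda\,A|\le r$, i.e. $p_\lambda\in B_r(A)$. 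This proves convexity of $B_r(A)$ for all $0\le r<\infty$ (the value $r=0$, where $B_0(A)=\Cl A$ by Lemma~\ref{lem:Closed}, is covered by the same computation).

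For $U_r(A)$ the argument is the same but cleaner: if $|p\,A|<r$ and $|q\,A|<r$ then I can pick $a,b\in A$ with $|p\,a|<r$ and $|q\,b|<r$ outright, with no $\varepsilon$ needed, since each infimum is strictly below $r$. The identical estimate then gives $|p_\lambda\,c|<r$, so $p_\lambda\in U_r(A)$. The only special case is $r=0$, where $U_0(A)=\emptyset$ is convex vacuously.

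The one genuine subtlety—and the reason the closed and open cases are worth separating—is that the infimum defining $|p\,A|$ need not be attained: for the closed ball this forces the $\varepsilon$-approximation and a passage to the limit, whereas for the open ball the strict inequality supplies the required witnesses directly. Beyond this I expect no obstacle, since every metric fact used (triangle inequality, homogeneity, translation invariance) is exactly what the norm provides, and convexity of $A$ enters only to guarantee that the averaged witness $c$ again lies in $A$.
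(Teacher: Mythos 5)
Your proof is correct and follows essentially the same route as the paper: average near-optimal witnesses $a,b\in A$ using convexity of $A$, apply the triangle inequality and homogeneity of the norm, and handle the non-attained infimum in the closed case by approximation (your $\varepsilon\to0$ limit is just the paper's sequences $a_i,b_i$ in different clothing). Your explicit treatment of the degenerate case $U_0(A)=\emptyset$ is a minor point of extra care the paper leaves implicit, but it does not change the argument.
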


\begin{proof}
To start with, we prove that $U_r(A)$ is convex. Take arbitrary $x,y\in U_r(A)$ then, by Lemma~\ref{lem:Union}, there exists $a,b\in A$ such that $|a\,x|<r$ and $|b\,y|<r$. Then for any $\lambda\in[0,1]$, we have $(1-\lambda)a+\lambda b\in A$ due to convexity of $A$, and
$$
\|(1-\lambda)a+\lambda b-(1-\lambda)x+\lambda y\|\le(1-\lambda)|a\,x|+\lambda|b\,y|<r,
$$
hence $(1-\lambda)x+\lambda y\in U_r(A)$.

To prove the result for $B_r(A)$, it suffices to change $a$ and $b$ with sequences $a_i\in A$ and $b_i\in A$ such that $|a_i\,x|\to|x\,A|\le r$ and $|b_i\,y|\to|y\,A|\le r$, proceed in the same way as above and pass to the limit.
\end{proof}

Applying Lemma~\ref{lem:convexity} for $r=0$, we get the following

\begin{cor}\label{cor:closed_conv}
In a real normed space $X$, the closure of a nonempty convex set is convex.
\end{cor}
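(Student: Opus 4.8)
The plan is to obtain this as an immediate specialization of the two lemmas already proved, with no new argument required. The key observation is that the closure of a set is exactly a closed ball of radius zero around it, so convexity of closed balls should hand us convexity of the closure for free.

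Concretely, first I would invoke Lemma~\ref{lem:convexity}, which asserts that for any nonempty convex subset $A$ of a real normed space $X$ and any $0\le r<\infty$, the closed neighborhood $B_r(A)$ is convex. The statement explicitly permits the endpoint $r=0$, so I may set $r=0$ and conclude that $B_0(A)$ is convex. Next I would recall Lemma~\ref{lem:Closed}, which gives the identification $B_0(A)=\Cl A$. Substituting this equality into the previous sentence yields at once that $\Cl A$ is convex, which is precisely the assertion of the corollary.

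The only point worth verifying is that the hypotheses line up, and they do exactly: Lemma~\ref{lem:convexity} demands that $A$ be a nonempty convex subset of a real normed space, which is verbatim the hypothesis of the corollary, and it covers the case $r=0$ that we need. Thus there is no hidden gap in passing from the lemma to the corollary.

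I do not expect any real obstacle here, since the substance of the argument has already been carried out earlier: the genuine work lies inside the proof of Lemma~\ref{lem:convexity} (in particular the limiting argument that upgrades convexity of the open neighborhood $U_r(A)$ to that of the closed neighborhood $B_r(A)$ by replacing the witnesses $a,b\in A$ with approximating sequences), together with the elementary identity $B_0(A)=\Cl A$ from Lemma~\ref{lem:Closed}. The corollary is then a one-line consequence obtained by specializing $r$ to $0$.
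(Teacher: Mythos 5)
Your proposal is correct and coincides with the paper's own argument: the paper derives the corollary precisely by applying Lemma~\ref{lem:convexity} with $r=0$, using the identity $B_0(A)=\Cl A$ from Lemma~\ref{lem:Closed}, exactly as you do.
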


The next lemma is standard, see~\cite[p.~22]{TuzLections}.

\begin{lem}\label{sum_0}
Let $A$ be a nonempty subset of a metric space $X$, and $r, r' \in [0, \infty)$. Then $B_r\bigl(B_{r'}(A)\bigr)\subset B_{r+r'}(A)$.
\end{lem}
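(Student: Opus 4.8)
The plan is to prove the inclusion directly by taking an arbitrary point of the left-hand side and showing that it belongs to the right-hand side, unwinding in turn the two infima that define the nested neighborhoods. First I would fix an arbitrary $p\in B_r\bigl(B_{r'}(A)\bigr)$; by Definition~\ref{dfn:balls} this means $|p\,B_{r'}(A)|\le r$. The goal is then to establish $|p\,A|\le r+r'$, which by the same definition is exactly the assertion $p\in B_{r+r'}(A)$.

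Since $|p\,B_{r'}(A)|=\inf\bigl\{|p\,q|:q\in B_{r'}(A)\bigr\}\le r$, for an arbitrary $\varepsilon>0$ I can choose a point $q\in B_{r'}(A)$ with $|p\,q|<r+\varepsilon$. In turn, $q\in B_{r'}(A)$ means $|q\,A|=\inf\bigl\{|q\,a|:a\in A\bigr\}\le r'$, so I can choose $a\in A$ with $|q\,a|<r'+\varepsilon$. The triangle inequality then yields $|p\,a|\le|p\,q|+|q\,a|<r+r'+2\varepsilon$, and hence $|p\,A|\le|p\,a|<r+r'+2\varepsilon$.

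Letting $\varepsilon\to0$ gives $|p\,A|\le r+r'$, which is precisely $p\in B_{r+r'}(A)$, so the inclusion follows. The argument is essentially routine; the only point requiring any care is that the distances to sets here are defined as infima rather than minima, since the sets $B_{r'}(A)$ and $A$ need not be compact. Consequently I cannot in general select points realizing these distances and must instead work with $\varepsilon$-approximations, combining them through the triangle inequality and passing to the limit only at the very end. The hypothesis $A\ne\emptyset$ guarantees that all the distances involved are finite, and beyond the triangle inequality no further structure of $X$ is needed.
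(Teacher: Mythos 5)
Your proof is correct: the $\varepsilon$-approximation of both infima followed by the triangle inequality is exactly the standard argument, and you rightly note that one cannot select distance-realizing points since the sets need not be compact. The paper itself gives no proof here, citing this lemma as standard from \cite[p.~22]{TuzLections}, and your argument is the canonical one that reference supplies.
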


\begin{rk}
In general metric spaces, we cannot change the inclusion to equality in Lemma~\ref{sum_0}. For example, let $A=\{-1\}\subset\{-1\}\cup\{1/n\}_{n\in\mathbb{N}}=:X\subset\mathbb{R}$, then $B_1(A)=A$, hence $B_1(B_1)(A)=A$. However, $B_2(A)=X$.
\end{rk}

\begin{lem}\label{sum_1}
Let $A$ be a nonempty subset of a real normed space $X$, and $r, r' \in [0, \infty)$. Then $B_r\bigl(B_{r'}(A)\bigr) = B_{r+r'}(A)$.
\end{lem}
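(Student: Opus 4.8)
The plan is to prove the two inclusions separately. The inclusion $B_r\bigl(B_{r'}(A)\bigr)\subset B_{r+r'}(A)$ already holds in an arbitrary metric space by Lemma~\ref{sum_0}, so the entire content of the lemma is the reverse inclusion $B_{r+r'}(A)\subset B_r\bigl(B_{r'}(A)\bigr)$, which is where the linear structure of $X$ must enter. I would fix an arbitrary point $x\in B_{r+r'}(A)$, set $d:=|x\,A|\le r+r'$, and reduce the goal to showing that $|x\,B_{r'}(A)|\le r$.

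The geometric idea is to travel from $x$ toward $A$ and stop at distance exactly $r'$ from a point of $A$. The decisive normed-space fact is additivity of the norm along a segment: if $y=(1-t)x+ta$ with $t\in[0,1]$, then $\|x-y\|=t\,\|x-a\|$ and $\|y-a\|=(1-t)\,\|x-a\|$, so that $\|x-a\|=\|x-y\|+\|y-a\|$. This is precisely the property that fails in a general metric space, and it explains why Lemma~\ref{sum_0} cannot be upgraded to equality there (cf.\ the remark following it).

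Concretely, I would first dispose of the trivial case $d\le r'$: then $x\in B_{r'}(A)$, and since every point lies in its own $r$-neighborhood, $x\in B_r\bigl(B_{r'}(A)\bigr)$. In the main case $d>r'$, I would argue by approximation, because the infimum defining $d$ need not be attained when $A$ is not closed. For each $a\in A$ with $\|x-a\|>r'$, let $y_a$ be the point of the segment $[x,a]$ satisfying $\|y_a-a\|=r'$; then $y_a\in B_{r'}(A)$, and by the additivity identity $\|x-y_a\|=\|x-a\|-r'$, whence $|x\,B_{r'}(A)|\le\|x-a\|-r'$. Passing to the infimum over admissible $a$ (legitimate since $\|x-a\|$ can be made arbitrarily close to $d>r'$) gives $|x\,B_{r'}(A)|\le d-r'\le r$, i.e.\ $x\in B_r\bigl(B_{r'}(A)\bigr)$.

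The only delicate point will be the non-attainment of the infimum: I must pick $a$ with $\|x-a\|$ close to $d$ rather than equal to it, and verify that for such $a$ one has $\|x-a\|>r'$ so that the point $y_a$ is well defined on the segment. Everything else is the routine segment computation recorded above.
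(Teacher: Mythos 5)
Your proposal is correct and follows essentially the same route as the paper: both take the forward inclusion from Lemma~\ref{sum_0} and prove the reverse one by picking a near-optimal $a\in A$ (approximating the possibly unattained infimum) and sliding along the segment $[x,a]$ to a point at distance exactly $r'$ from $a$, using additivity of the norm along segments. The only difference is cosmetic: the paper splits cases on $|p\,a|\le r'$ versus $|p\,a|>r'$ for each $\varepsilon$-approximant, while you split on $d=|x\,A|\le r'$ versus $d>r'$ up front, which even slightly streamlines the admissibility check since in the second case every $a\in A$ satisfies $\|x-a\|\ge d>r'$.
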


\begin{proof}
Lemma~\ref{sum_0} implies $B_r\bigl(B_{r'}(A)\bigr) \subset B_{r+r'}(A)$. Let us prove the converse inclusion. Consider an arbitrary $p\in B_{r+r'}(A)$, then for any $\varepsilon>0$ there exists $a\in A$ such that $|p\,a|<r+r'+\varepsilon$. If $|p\,a|\le r'$, then we put $b=p$, hence we got $b\in B_{r'}(A)$ and $|b\,p|=0<r+\varepsilon$. If $|p\,a|>r'$, we choose $b\in[p,a]$ such that $|a\,b|=r'$, hence $b\in B_{r'}(A)$ and $|b\,p|<r+\varepsilon$. Since we can choose $\varepsilon>0$ in arbitrary way, we got $\bigl|p\,B_{r'}(A)\bigr|\le r$, thus $p\in B_r\bigl(B_{r'}(A)\bigr)$.
\end{proof}

\begin{proposition}\label{as:ray}
Let $A$ be a nonempty convex subset of a real normed space $X$, $x\not\in\Cl A$, and $p\in A$. Then the ray
$$
\ell := (1-\lambda) p + \lambda x,\ \ \lambda\in [0, \infty),
$$
does not lie in any ball $B_r(A)$ for all $0\le r<\infty$.
\end{proposition}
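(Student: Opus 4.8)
The plan is to show that the distance from the points of $\ell$ to $A$ grows without bound, which immediately gives $\ell\not\subset B_r(A)$ for every finite $r$. First I would record that, since $\Cl A=B_0(A)$ by Lemma~\ref{lem:Closed}, the hypothesis $x\notin\Cl A$ is equivalent to $\delta:=|x\,A|>0$. Writing $q_\lambda=(1-\lambda)p+\lambda x$ for the point of $\ell$ with parameter $\lambda$, the statement reduces to proving that $g(\lambda):=|q_\lambda\,A|\to\infty$ as $\lambda\to\infty$; indeed, once this is known, for any $r\ge 0$ some $q_\lambda$ satisfies $|q_\lambda\,A|>r$, i.e. $q_\lambda\notin B_r(A)$.

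The key observation is that $g$ is a convex function of $\lambda$. Since $q_\lambda$ is affine in $\lambda$, we have $q_{\mu\lambda_1+(1-\mu)\lambda_2}=\mu q_{\lambda_1}+(1-\mu)q_{\lambda_2}$ for $\mu\in[0,1]$. For arbitrary $a,b\in A$, convexity of $A$ gives $\mu a+(1-\mu)b\in A$, and the triangle inequality together with homogeneity of the norm yields $g\bigl(\mu\lambda_1+(1-\mu)\lambda_2\bigr)\le\mu\,|q_{\lambda_1}\,a|+(1-\mu)\,|q_{\lambda_2}\,b|$. Taking infima over $a$ and over $b$ independently gives the convexity inequality $g\bigl(\mu\lambda_1+(1-\mu)\lambda_2\bigr)\le\mu\,g(\lambda_1)+(1-\mu)\,g(\lambda_2)$.

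Next I would use the two anchor values $g(0)=|p\,A|=0$ (because $p\in A$) and $g(1)=|x\,A|=\delta>0$, combined with the standard monotonicity of difference quotients of a convex function: for $\lambda>1$ one has $\frac{g(\lambda)-g(1)}{\lambda-1}\ge\frac{g(1)-g(0)}{1-0}=\delta$, hence $g(\lambda)\ge\delta\lambda$. Letting $\lambda\to\infty$ forces $g(\lambda)\to\infty$, which completes the argument.

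I do not expect a serious obstacle; the only step needing care is the justification that $g$ is convex (equivalently, producing a clean linear-in-$\lambda$ lower bound for the distance). An alternative route, avoiding the convexity computation, is to separate the point $x$ from the closed convex set $\Cl A$ (convex by Corollary~\ref{cor:closed_conv}) by a continuous linear functional $f$ with $\|f\|=1$ and $f(a)\le c<f(x)$ for all $a\in\Cl A$; then for any $a\in A$ one has $|q_\lambda\,a|\ge f(q_\lambda)-f(a)\ge f(q_\lambda)-c$, so $|q_\lambda\,A|\ge f(q_\lambda)-c=f(p)-c+\lambda\bigl(f(x)-f(p)\bigr)$, and this tends to $\infty$ because $f(x)-f(p)\ge f(x)-c>0$. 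Either approach exhibits the linear growth of the distance and hence the claim.
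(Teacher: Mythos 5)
Your proposal is correct, and your primary route is in substance the paper's own argument in different clothing: the paper picks $y_k\in\ell$ with $|p\,y_k|=k|p\,x|$ and applies the homothety with center $p$ and coefficient $1/k$, which maps a near-minimizer $b\in A$ for $y_k$ to a point $a\in[b,p]\subset A$, yielding $|x\,A|\le|y_k\,b|/k$ and hence $|y_k\,A|\ge k\,|x\,A|$. That homothety computation is exactly the chord-slope inequality for your convex function $g(\lambda)=|q_\lambda\,A|$ at the anchor $g(0)=0$ (with the witness $a=p$), so your ``convexity of $g$ plus monotone difference quotients'' derivation proves the identical linear lower bound $g(\lambda)\ge\lambda\,|x\,A|$ by the same mechanism, just packaged as a general fact about the distance function along an affine path rather than unrolled by hand; your convexity step (take $\mu a+(1-\mu)b\in A$, apply the triangle inequality, then infimize over $a$ and $b$ separately) is sound and mirrors the technique the paper uses elsewhere in Proposition~\ref{conv_func}. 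Your alternative via separation is genuinely different: strongly separating $x$ from the closed convex set $\Cl A$ (convex by Corollary~\ref{cor:closed_conv}) by a norm-one functional gives the linear growth $|q_\lambda\,A|\ge f(p)-c+\lambda\bigl(f(x)-f(p)\bigr)$ immediately, and it is arguably the slicker argument, but it invokes the Hahn--Banach separation theorem, whereas the paper's proof is elementary and self-contained, using only convexity and a homothety. Both routes are complete; the only care needed, as you note, is the convexity of $g$, which you justify correctly.
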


\begin{proof}
Since $x\not\in\Cl A$, we have $r:=|x\,A|>0$ and hence $|p\,x|\ge r>0$. Choose an arbitrary $k>1$ and $y_k\in\ell$ such that $|p\,y_k| = k|p\,x|$. Due to Corollary~\ref{cor:closed_conv}, the set $\Cl A$ is convex. Then $y_k\ne\Cl A$, otherwise $x\in\Cl A$. Then $|y_k\,A|=:\rho>0$.

For any $\varepsilon>0$, let $b\in A$ be such that $|y_k\,b|<\rho+\varepsilon$. Consider homothety $f$ with center $p$ and coefficient $1/k$, then $f(y_k)=x$. Let $a=f(b)$, then $a\in[b,p]\subset A$ because $A$ is convex. Hence
$$
r=|x\,A|\le|x\,a|=|y_k\,b|/k<(\rho+\varepsilon)/k,
$$
thus $\rho>k\,r-\varepsilon$. Since $\varepsilon$ is chosen arbitrarily, we have $|y_k\,A|=\rho\ge k\,r$, thus $|y_k\,A|\to\infty$ as $k\to\infty$, that completes the proof.
\end{proof}

\subsubsection{Rays in convex set}
\begin{lem}\label{lem_invar}
Let $X$ be a real normed space, and let for a sequence $\{x_i\}\subset X$ and a point $p\in X$, it holds $\|x_i - p\|\to \infty$ and $\frac{x_i - p}{\|x_i - p\|}\to y\in X$ as $i\to \infty$. Then for any point $q\in X$, we have $\|x_i - q\|\to \infty$ and $\frac{x_i - q}{\|x_i - q\|}\to y$ as $i\to \infty$.
\end{lem}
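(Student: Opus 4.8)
The plan is to reduce everything to two elementary consequences of the triangle inequality: that translating the base point by a bounded amount does not affect whether the norms blow up, and that it does not affect the limiting direction. Throughout I would abbreviate $r_i:=\|x_i-p\|$ and $s_i:=\|x_i-q\|$, and write $v:=p-q$, a fixed vector.

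First I would establish $\|x_i-q\|\to\infty$. This is immediate from the reverse triangle inequality, since
$$
s_i=\|x_i-q\|\ge\|x_i-p\|-\|p-q\|=r_i-\|v\|\to\infty.
$$
The same inequality, applied in both directions, gives $|s_i-r_i|\le\|v\|$, so that $s_i/r_i=1+(s_i-r_i)/r_i\to1$ because the numerator is bounded while $r_i\to\infty$; consequently $r_i/s_i\to1$ as well.

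For the direction, I would use the decomposition $x_i-q=(x_i-p)+(p-q)=(x_i-p)+v$ and divide by $s_i$:
$$
\frac{x_i-q}{\|x_i-q\|}=\frac{r_i}{s_i}\cdot\frac{x_i-p}{r_i}+\frac{v}{s_i}.
$$
As $i\to\infty$, the factor $r_i/s_i\to1$ and $\frac{x_i-p}{r_i}\to y$ by hypothesis, so the first term converges to $y$; the second term converges to $0$ since $\|v\|$ is fixed and $s_i\to\infty$. Hence $\frac{x_i-q}{\|x_i-q\|}\to y$, which is the required conclusion.

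There is no genuine obstacle here: the only point that needs a moment's care is the normalization ratio $r_i/s_i\to1$, which is exactly what lets the unbounded translation $v$ become asymptotically negligible after dividing by the growing norm $s_i$. Everything else is a direct passage to the limit in the displayed identity.
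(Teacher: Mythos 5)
Your proposal is correct and takes essentially the same approach as the paper: both arguments establish $\|x_i-q\|/\|x_i-p\|\to 1$ via the triangle inequality and then pass to the limit in the algebraic decomposition of the normalized vector, the only cosmetic difference being that the paper first translates $p$ to the origin while you carry the fixed vector $v=p-q$ explicitly (and you spell out $\|x_i-q\|\to\infty$ a bit more carefully). No gaps.
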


\begin{proof}
Choose a new coordinate system by taking $p$ as the new origin. Since the shift $x\mapsto x-p$ is an isometry, it suffices to prove the lemma in the new coordinates, i.e., without loss of generality we can assume that $p=0$. Moreover, starting from sufficiently large $i$, we can also assume that both $x_i$ and $x_i-q$ do not vanish for all $i$.

Notice that $\|x_i-q\|/\|x_i\|\to1$ and thus $\|x_i\|/\|x_i-q\|\to1$ as $i\to\infty$, because
$$
1-\frac{\|q\|}{\|x_i\|}\le\frac{\|x_i-q\|}{\|x_i\|}\le1+\frac{\|q\|}{\|x_i\|}
$$
for each $i$. Hence
$$
\frac{x_i-q}{\|x_i-q\|}=\frac{\|x_i\|}{\|x_i-q\|}\biggl(\frac{x_i}{\|x_i\|}-\frac{q}{\|x_i\|}\biggr)\to y,
$$
that completes the proof.
\end{proof}

\begin{proposition}\label{two_abz}
Let $A$ be a nonempty convex subset of a real normed space $X$, and suppose that some ray $\ell$ emitted from a point $x\in A$ belongs to $A$. Then for any $y\in A$ and any $z\in[x, y)$, the ray $\ell'$ emitted from $z$ and codirected with $\ell$ belongs to $A$ as well. Moreover, if $A$ is closed, then we can take $z$ from $[x,y]$.
\end{proposition}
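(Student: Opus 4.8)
The plan is to realize each point of $\ell'$ as an explicit convex combination of a point of $\ell$ and the point $y$, so that membership in $A$ follows from a single application of convexity. To set up notation, I would write $\ell=\{x+\lambda e:\lambda\ge0\}$ for its direction vector $e$, so that the codirected ray from $z$ is $\ell'=\{z+\lambda e:\lambda\ge0\}$. Since $z\in[x,y)$, we have $z=(1-t)x+ty$ for some $t\in[0,1)$, and the strict inequality $t<1$ will be crucial.

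The core step is the identity, valid for every $\lambda\ge0$,
$$
z+\lambda e=(1-t)\Bigl(x+\tfrac{\lambda}{1-t}\,e\Bigr)+t\,y,
$$
which one verifies by expanding the right-hand side. Because $t<1$, the coefficient $\lambda/(1-t)$ is nonnegative, hence $x+\frac{\lambda}{1-t}e\in\ell\subset A$; moreover $y\in A$ by hypothesis. As $1-t$ and $t$ are nonnegative and sum to $1$, convexity of $A$ yields $z+\lambda e\in A$. Since $\lambda\ge0$ is arbitrary, $\ell'\subset A$.

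For the closed case I must additionally allow $z=y$, i.e.\ $t=1$, where the identity above is undefined. Here the approach is to pass to a limit: choose $t_n\uparrow1$ with $t_n<1$ and set $z_n=(1-t_n)x+t_ny\in[x,y)$, so that $z_n\to y$. For each fixed $\lambda\ge0$ the first part gives $z_n+\lambda e\in A$, while $z_n+\lambda e\to y+\lambda e$; closedness of $A$ then forces $y+\lambda e\in A$. As this holds for all $\lambda\ge0$, the ray emitted from $y$ and codirected with $\ell$ lies in $A$, as required. I expect no serious obstacle here, since the argument is a direct convex-combination computation; the only delicate point is that the weight $\lambda/(1-t)$ blows up as $t\to1$, which is precisely why the endpoint $z=y$ cannot be handled by the same formula and instead relies on the closedness hypothesis together with the limiting argument.
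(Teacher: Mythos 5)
Your proof is correct. For $z\in[x,y)$ it is essentially the paper's argument in computational form: the paper says that the cone with vertex $y$ and base $\ell$ consists of segments joining points of $A$ and contains every ray $\ell'$, and your identity $z+\lambda e=(1-t)\bigl(x+\tfrac{\lambda}{1-t}\,e\bigr)+t\,y$ is precisely the verification of that geometric claim, point by point. Where you genuinely diverge is the closed case $z=y$. The paper fixes a target point $p\in\ell'$, chooses $x_i\in\ell$ with $|x\,x_i|\to\infty$, and invokes its Lemma~\ref{lem_invar} (convergence of the unit vectors $\frac{x_i-y}{\|x_i-y\|}$ to the ray direction) to produce points $p_i\in[y,x_i]\subset A$ at fixed distance $|y\,p|$ from $y$ that converge to $p$; closedness then finishes. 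You instead fix the ray parameter $\lambda$ and move the basepoint: $z_n\to y$ along $[x,y)$, apply the already-proved half to get $z_n+\lambda e\in A$, and pass to the limit $z_n+\lambda e\to y+\lambda e$, which is a trivial translation limit. Your route bypasses Lemma~\ref{lem_invar} entirely (in the paper that lemma exists solely to serve this proposition), at no loss of generality; the paper's escape-to-infinity argument buys nothing extra here beyond isolating that auxiliary lemma, so your version is the more economical of the two.
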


\begin{proof}
To start with, we consider the case $z\in[x,y)$. It is easy to see that the cone $y\ell$ with the vertex $y$ and the base $\ell$ contains each ray $\ell'$. Since $y\ell$ consists of segments connecting points from $A$, it is contained in $A$, together with $\ell'$.

Now, assume that $A$ is closed and take $z=y$. Consider an arbitrary $p\in\ell'$, and put $r=|z\,p|$. Choose a sequence $x_i\in\ell$ such that $|x\,x_i|\to\infty$ as $i\to\infty$ and $|z\,x_i|>r$ for all $i$. By Lemma~\ref{lem_invar}, the directions of vectors $x_i - z$ tends to the direction of the rays $\ell$ and $\ell'$, thus the points $p_i\in[z,x_i]\subset A$ such that $|z\,p_i|=r$ tends to $p$. Since $A$ is closed, then $p\in A$.
\end{proof}

\subsubsection{Hausdorff distance}
For any set $X$, denote by $\mathcal{P}_0(X)$ the collection of all nonempty subsets of $X$.

\begin{dfn}\label{distance_H}
Given a metric spaces $X$, we endow $\mathcal{P}_0(X)$ with the \emph{Hausdorff distance $d_H$} defined as follows: for any $A,B\in\mathcal{P}_0(X)$,  we put
$$
d_H(A, B) = \inf\bigl\{r:A\subset B_r(B),\,B\subset B_r(A)\bigr\}.
$$
\end{dfn}

Notice that $d_H$ is non-negative, symmetric, and satisfies the triangle inequality, but can be zero for $A\ne B$ and can be infinite.

\begin{proposition}[{\cite[Corollary~5.30]{TuzLections}}]\label{Select}
For an arbitrary metric space $X$ and an arbitrary decreasing sequence of non-empty subsets $A_1\supset A_2\supset\ldots$ of $X$, where $A_1$ is compact, we have $A_k\xrightarrow{d_H}\bigcap\limits_n\Cl A_n\ne\emptyset$.
\end{proposition}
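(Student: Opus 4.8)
The plan is to split the statement into two independent claims — that the candidate limit $A:=\bigcap_n\Cl A_n$ is nonempty, and that $A_k\xrightarrow{d_H}A$ — and to reduce everything to \emph{closed} sets first. Indeed, since $A_k\subset\Cl A_k$ and every point of $\Cl A_k$ has zero distance to $A_k$, one checks $d_H(A_k,\Cl A_k)=0$; writing $C_k:=\Cl A_k$ and using the triangle inequality for $d_H$ then gives $d_H(A_k,A)=d_H(C_k,A)$. Moreover $A$ is closed as an intersection of closed sets, so $\Cl A=A$. Thus it suffices to work with the sets $C_k$.

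For nonemptiness I would observe that $A_1$, being compact in a metric space, is closed, so $C_1=A_1$, and each $C_n\subset C_1$ is a closed subset of a compact set, hence itself compact and nonempty. Passing the inclusions $A_n\supset A_{n+1}$ to closures yields a nested sequence $C_1\supset C_2\supset\cdots$ of nonempty compact sets, so by the finite intersection property of compact sets (Cantor's intersection theorem) $A=\bigcap_n C_n\ne\emptyset$.

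For the convergence, one of the two one-sided bounds is free: since $A$ is the intersection of all the $C_n$, we have $A\subset C_k$ for every $k$, so $A\subset B_r(C_k)$ for all $r\ge0$ and the quantity $\sup_{y\in A}|y\,C_k|$ vanishes. It remains to control the other side, the non-increasing quantity $\delta_k:=\sup_{x\in C_k}|x\,A|$ (non-increasing because $C_{k+1}\subset C_k$). I would argue by contradiction: if $\delta_k\ge\varepsilon>0$ for all $k$, then, using compactness of each $C_k$ and continuity of $x\mapsto|x\,A|$, I pick $x_k\in C_k$ with $|x_k\,A|=\delta_k\ge\varepsilon$. All the $x_k$ lie in the compact set $C_1$, so some subsequence $x_{k_j}$ converges to a point $x_*\in C_1$.

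The crux — and essentially the only step requiring care — is to show that this limit $x_*$ simultaneously lies in $A$ and keeps distance at least $\varepsilon$ from $A$. For membership I fix $n$; for $k_j\ge n$ one has $x_{k_j}\in C_{k_j}\subset C_n$, and since $C_n$ is closed the limit $x_*$ lies in $C_n$; as $n$ is arbitrary, $x_*\in\bigcap_n C_n=A$, whence $|x_*\,A|=0$. On the other hand, the distance function $x\mapsto|x\,A|$ is continuous (as recalled after Definition~\ref{dfn:one}, valid since $A\ne\emptyset$), so $|x_*\,A|=\lim_j|x_{k_j}\,A|\ge\varepsilon>0$, a contradiction. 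Hence $\delta_k\to0$, giving $d_H(C_k,A)\to0$ and therefore $A_k\xrightarrow{d_H}A$. I expect no obstacle beyond keeping the reduction-to-closures bookkeeping straight; the whole weight of the argument rests on the compactness extraction combined with the interplay between closedness of the $C_n$ (for membership of $x_*$) and continuity of the distance (for the lower bound).
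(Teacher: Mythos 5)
Your proof is correct. Note first that the paper itself does not prove Proposition~\ref{Select}: the statement is imported from \cite[Corollary~5.30]{TuzLections} and used as a black box, so there is no in-text argument to compare yours against --- what you have produced is a self-contained proof of the cited result. Your route is the classical one: reduction to the closed sets $C_k=\Cl A_k$ via $d_H(A_k,\Cl A_k)=0$ (with the equality $d_H(A_k,A)=d_H(C_k,A)$ correctly justified by two applications of the triangle inequality), Cantor's intersection theorem for nonemptiness, the free inclusion $A\subset C_k$ for one half of the Hausdorff estimate, and a sequential compactness extraction for the other half. The delicate points are all handled: the supremum $\delta_k=\sup_{x\in C_k}|x\,A|$ is attained because $x\mapsto|x\,A|$ is continuous (as recalled after Definition~\ref{dfn:one}, and $A\ne\emptyset$ is already known at that stage) and $C_k$ is compact, and the monotonicity of $\delta_k$ legitimizes the contradiction hypothesis $\delta_k\ge\varepsilon$ for \emph{all} $k$. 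One streamlining worth knowing: instead of extracting a convergent subsequence, apply the finite intersection property once more to the nested compacts $D_k:=C_k\setminus U_\varepsilon(A)$, whose total intersection is $A\setminus U_\varepsilon(A)=\emptyset$; hence $D_k=\emptyset$ for all large $k$, i.e.\ $C_k\subset U_\varepsilon(A)$ and so $\delta_k\le\varepsilon$ directly. This variant avoids both the attainment of the supremum and the sequential compactness step (and thus works verbatim with covering compactness), but your version is equally valid.
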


\subsection{Main part}\label{Main}

Let $X$ be a metric spaces and $\mathcal{P}_0(X)$ be endowed with the Hausdorff distance $d_H$. Then, for any $A, B\in\mathcal{P}_0(X)$ and any $r\in\bigl(|A\,B|,\infty\bigr)\subset\mathbb{R}$, we put $F_r(A, B)=B_r(A)\cap B$. Note that it holds $F_r(A, B)\in \mathcal{P}_0(X)$. Thus, henceforth we will measure the distance between sets $F_{r_1}(A, B)$ and $F_{r_2}(A, B)$ using the Hausdorff distance $d_H$, and we will understand continuity of $F_r(A, B)$ in $r$ with respect to $d_H$.

\subsubsection{Continuity of $F_r$ in metric spaces}\label{sec:cont_metric}
We start from the first part of our main result. If $X$ is a metric space, we denote by $\mathcal{P}_\comp(X)$ the subfamily of $\mathcal{P}_0(X)$ consisting of all compact sets.

\begin{thm}\label{thm:rightSemicont}
Let $A$ and $B$ be non-empty subsets of a metric space $X$, and suppose that $B\in \mathcal{P}_\comp(X)$. Then the mapping $F_r(A,B)$ is right semicontinuous in $r$.
\end{thm}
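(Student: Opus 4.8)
The plan is to exploit that $F_r(A,B)$ is monotone nondecreasing in $r$, so that approaching $r_0$ from the right produces a decreasing nested family, and then to invoke the selection-type Proposition~\ref{Select}. First I would record the monotonicity: since $B_r(A)=\{p:|p\,A|\le r\}$ grows with $r$, we have $F_{r_0}\subset F_r$ for every $r>r_0$. Consequently the one-sided deviation $\sup_{x\in F_r}|x\,F_{r_0}|$ already equals $d_H(F_r,F_{r_0})$ and is itself nondecreasing in $r$; hence the right limit $\lim_{r\to r_0^+}d_H(F_r,F_{r_0})$ exists, and it suffices to prove that it vanishes along a single decreasing sequence $r_n\downarrow r_0$.

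Next I would fix such a sequence and verify the hypotheses of Proposition~\ref{Select} for $\{F_{r_n}\}$. Each $F_{r_n}=B_{r_n}(A)\cap B$ is nonempty (as $r_n>r_0>|A\,B|$), and the family is decreasing by the monotonicity above. Moreover $B_{r_n}(A)$ is closed by Lemma~\ref{lem:Closed} and $B$ is compact, hence closed, so every $F_{r_n}$ is a closed subset of the compact set $B$ and therefore compact; in particular $F_{r_1}$ is compact. Proposition~\ref{Select} then yields $F_{r_n}\xrightarrow{d_H}\bigcap_n\Cl F_{r_n}$.

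It then remains to identify this limit with $F_{r_0}$. Since each $F_{r_n}$ is already closed, $\Cl F_{r_n}=F_{r_n}$, so
$$
\bigcap_n\Cl F_{r_n}=\Bigl(\bigcap_n B_{r_n}(A)\Bigr)\cap B.
$$
The key elementary computation is $\bigcap_n B_{r_n}(A)=\{p:|p\,A|\le r_n\ \forall n\}=\{p:|p\,A|\le\inf_n r_n\}=B_{r_0}(A)$, using $\inf_n r_n=r_0$ together with the distance-function description of closed neighborhoods from Definition~\ref{dfn:balls}. Hence the intersection equals $B_{r_0}(A)\cap B=F_{r_0}$, so $F_{r_n}\xrightarrow{d_H}F_{r_0}$, which combined with the monotonicity from the first step gives right semicontinuity at $r_0$.

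I do not expect a genuine obstacle here: the argument is essentially the monotone–nested–compact convergence packaged in Proposition~\ref{Select}. The only points requiring a little care are the reduction of the two-sided right limit to one decreasing sequence (handled by monotonicity of $d_H(F_r,F_{r_0})$) and the identity $\bigcap_n B_{r_n}(A)=B_{r_0}(A)$. It is worth emphasizing that compactness of $B$ enters exactly to supply the compact first term $F_{r_1}$ demanded by Proposition~\ref{Select}; dropping it breaks the argument, consistent with the counterexamples the paper announces in Examples~\ref{ex:first} and~\ref{ex:first_normed}.
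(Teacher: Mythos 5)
Your proof is correct and takes essentially the same route as the paper's: nested compact sets $F_{r_n}$ (compact via Lemma~\ref{lem:Closed} and compactness of $B$) fed into Proposition~\ref{Select}, together with the identification $\bigcap_n B_{r_n}(A)=B_{r_0}(A)$, which is exactly the computation the paper performs. The only difference is presentational: you argue directly and make explicit the monotonicity reduction to a single decreasing sequence, whereas the paper wraps the same content in a proof by contradiction with that reduction left implicit.
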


\begin{proof}
For brevity, we put $F_r=F_r(A,B)$. Suppose otherwise that there exists $s\in \bigl(|A\,B|,\infty\bigr)$ such that $F_r$ is not right semicontinuous at $r=s$. Consider an arbitrary decreasing sequence $s_n$ such that $s_n\to s$ as $n\to\infty$. By Lemma~\ref{lem:Closed} and compactness of $B$, the sets $F_{s_n}$ are compact for all $n$, hence, by Proposition~\ref{Select}, we have $F_{s_n}\xrightarrow{d_H}\cap_nF_{s_n}=:F$. By assumption, $F\ne F_s$. Notice that
$$
F_s\subset\bigcap\limits_nF_{s_n}=F=\Bigl(\bigcap\limits_nB_{s_n}(A)\Bigr)\cap B,
$$
and since $F_s\subset F$, there exists $x\in F\setminus F_s$. For such $x$, we have $x\in B$, $x\in\cap_nB_{s_n}(A)$, and $x\not\in F_s=B_s(A)\cap B$, hence $x\not\in B_s(A)$ and $|x\,A|>s$. However, there exists $m$ such that $s_m<|x\,A|$, and therefore $x\not\in B_{s_m}(A)$, a contradiction with $x\in\cap_nB_{s_n}(A)$.
\end{proof}

The compactness condition of $B$ is essential in Theorem~\ref{thm:rightSemicont}.

\begin{exa}\label{ex:first}
Let $X=\mathbb{R}\setminus\{1\}$, $A=\{0\}$, and $B=\{0\}\cup\{1+1/n\}_{n=1}^\infty$. The set $B$ is closed and bounded, but not compact. It is easy to see that in this case, the mapping $F_r$ is right discontinuous at $r=1$.
\end{exa}

We can also construct a similar example in a normed space.

\begin{exa}\label{ex:first_normed}
Let $X$ be a real normed infinite-dimensional space with the origin $0$ and a countable basis $e_1,\,e_2,\ldots$. Let $A=\{0\}$ and $B=\{0\}\cup\left\{\left(1+1/n\right)e_n\right\}_{n=1}^\infty$. The set $B$ is closed and bounded but not compact. It is easy to see that in this case, the mapping $F_r$ is right discontinuous at $r=1$.
\end{exa}

The next example shows that the compactness condition of $B$ does not imply left semicontinuity.

\begin{exa}\label{ex:second}
In the Euclidean plane, let $A$ be a closed rectangle whose sides are parallel to the coordinate axes in the standard basis $e_1=(1,0)$, $e_2=(0,1)$, and the set $B$ be some non-convex compact subset placed over $A$, see Figure~\ref{discont}.
\begin{figure}[h]
\centering
\includegraphics[width=0.5\textwidth]{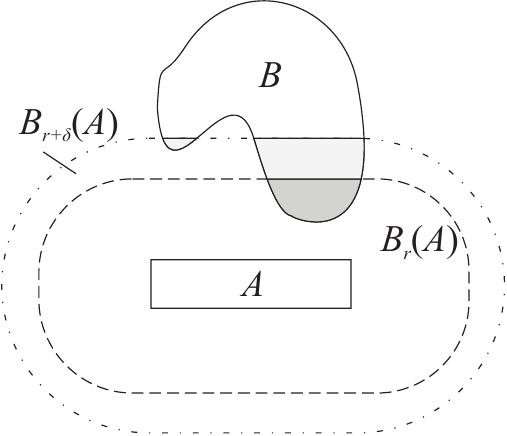}
\caption{The mapping $F_r(A,B)$ is discontinuous from the left provided by $B$ is compact}\label{discont}
\end{figure}
Note that as the radius $r$ increases, at some moment $r=r_0$ the set $F_r(A, B)$ acquires a new connected component. This implies that at the point $r=r_0$ the mapping $F_r$ has a left discontinuity in the Hausdorff distance.

We will prove the left semicontinuity of the mapping $F_r$ provided $A$ and $B$ are not necessarily compact, but are convex subsets of a normed space.
\end{exa}

\subsubsection{Continuity of $F_r$ in normed spaces}\label{sec:cont_normed}

If $X$ is a real normed space, we denote by $\mathcal{P}_\conv(X)$ the subfamily of $\mathcal{P}_0(X)$ consisting of all convex sets.

\begin{proposition}\label{conv_func}
Let $X$ be real normed space, and $A,\,B\in\mathcal{P}_\conv(X)$. Let the mapping $f\colon X\times\bigl(|A\,B|,\infty\bigr)\to[0,\infty)$ be defined by the rule $f(x,r)=\bigl|x\,F_r(A, B)\bigr|$. Then $f$ is convex in $r$.
\end{proposition}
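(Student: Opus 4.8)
The plan is to prove convexity directly from the definition: for $r_1,r_2\in\bigl(|A\,B|,\infty\bigr)$ and $\lambda\in[0,1]$, setting $r=(1-\lambda)r_1+\lambda r_2$, I would establish
$$
f(x,r)\le(1-\lambda)f(x,r_1)+\lambda f(x,r_2).
$$
First I would unwind the definition, $f(x,r)=\bigl|x\,F_r(A,B)\bigr|=\inf\bigl\{|x\,z|:z\in B,\ |z\,A|\le r\bigr\}$, and note that each $F_{r_i}(A,B)$ is nonempty because $r_i>|A\,B|$ forces a point of $B$ within distance $r_i$ of $A$. Since $r\ge\min\{r_1,r_2\}>|A\,B|$, the value $f(x,r)$ is also defined, so the target inequality makes sense.

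The key geometric input is the convexity of the distance-to-$A$ function. For convex $A$, any $z_1,z_2\in X$, and $\lambda\in[0,1]$, choosing $a_1,a_2\in A$ that nearly realize $|z_1\,A|$ and $|z_2\,A|$, and using that $(1-\lambda)a_1+\lambda a_2\in A$ together with the triangle inequality and homogeneity of the norm, yields
$$
\bigl|\bigl((1-\lambda)z_1+\lambda z_2\bigr)\,A\bigr|\le(1-\lambda)|z_1\,A|+\lambda|z_2\,A|.
$$
This is precisely the estimate underlying Lemma~\ref{lem:convexity}, and I would invoke it in that spirit.

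Now I would fix $\varepsilon>0$, pick $z_1\in F_{r_1}(A,B)$ and $z_2\in F_{r_2}(A,B)$ with $|x\,z_i|<f(x,r_i)+\varepsilon$, and set $z=(1-\lambda)z_1+\lambda z_2$. Convexity of $B$ gives $z\in B$, while the displayed inequality combined with $|z_i\,A|\le r_i$ gives $|z\,A|\le(1-\lambda)r_1+\lambda r_2=r$, so $z\in B_r(A)\cap B=F_r(A,B)$. Hence by the triangle inequality
$$
f(x,r)\le|x\,z|\le(1-\lambda)|x\,z_1|+\lambda|x\,z_2|<(1-\lambda)f(x,r_1)+\lambda f(x,r_2)+\varepsilon,
$$
and letting $\varepsilon\to0$ completes the argument.

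The proof is short, and the only point that genuinely uses the hypotheses is that the convex combination $z$ returns to the constraint set with the \emph{interpolated} radius $r$: this is exactly where convexity of $B$ and convexity of the distance function to $A$ are both needed. I do not expect a serious obstacle beyond the bookkeeping caution that the infima defining $f(x,r_i)$ may fail to be attained, which the $\varepsilon$-approximate minimizers circumvent cleanly.
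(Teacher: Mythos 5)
Your proof is correct, and it reaches the conclusion by a genuinely shorter route than the paper's. Both arguments share the same skeleton---pick $\varepsilon$-approximate nearest points $z_i\in F_{r_i}(A,B)$, form the convex combination $z=(1-\lambda)z_1+\lambda z_2$, use convexity of $B$ to get $z\in B$, and then show $z\in B_r(A)$---but you certify this last membership differently. You invoke the convexity of the distance function $z\mapsto|z\,A|$ for convex $A$, established by a second layer of $\varepsilon$-approximate minimizers inside $A$ (exactly the estimate already used in the proof of Lemma~\ref{lem:convexity}); this works directly with the closed balls $B_{r_i}(A)$ and tolerates non-attainment of both infima, since $|z\,A|\le(1-\lambda)r_1+\lambda r_2=r$ needs no exact or strict witnesses. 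The paper instead insists on strict witnesses $a_i\in A$ with $|y_i\,a_i|<r_i$, which forces it first to perturb the approximate nearest points $\widetilde{y}_i$ into the open set $U_{r_i}(A)\cap B$ along a segment $[z_i,\widetilde{y}_i)$; that detour costs the machinery of Lemma~\ref{lem:Access} (accessibility), Lemma~\ref{lem:Opened_in_Closed} ($U_r(A)=\Int B_r(A)$), and Lemma~\ref{lem:convexity}, and in exchange produces the slightly stronger conclusion $y\in U_r(A)$, which is not needed for the infimum bound. So your argument has strictly fewer dependencies and the same generality; the only points to make explicit in a final write-up are the one-line verification of the convexity inequality for $|\cdot\,A|$ (rather than citing Lemma~\ref{lem:convexity} ``in spirit,'' since that lemma states convexity of the balls, not of the distance function) and the observation, which you do make, that $F_{r_i}(A,B)\ne\emptyset$ and $r>|A\,B|$ so that all quantities are defined.
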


\begin{proof}
For brevity, put $F_{\rho}:=F_{\rho}(A,B)$. Fix $x\in X$ and set $r=(1-t)r_1+t\,r_2$, where $r_1,\,r_2\in\bigl(|A\, B|,\infty\bigr)$. Upto the end of this paragraph, all statements that include $i$ are valid for both $i=1,\,2$. Take any $\varepsilon>0$. There exists $\widetilde{y}_i\in F_{r_i}$ such that $|x\,\widetilde{y}_i|<f(x,r_i)+\varepsilon/2$. Since $r_i\in\bigl(|A\,B|,\infty\bigr)$, we have $U_{r_i}(A)\cap B\ne\emptyset$. Let $z_i\in U_{r_i}(A)\cap B$. Since $A$ and $B$ are convex, Lemmas~\ref{lem:convexity},~\ref{lem:Opened_in_Closed}, and~\ref{lem:Access} imply $[z_i,\widetilde{y}_i)\subset U_{r_i}(A)\cap B$. So, there exists $y_i\in U_{r_i}(A)\cap B$ such that $|\widetilde{y}_i\,y_i|<\varepsilon/2$. By triangle inequality, we get $|x\,y_i|<f(x,r_i)+\varepsilon$. In addition, since $y_i\in U_{r_i}(A)$, there exists $a_i\in A$ such that $|y_i\,a_i|<r_i$.

Let $y=(1-t)y_1+t\,y_2$ and $a=(1-t)a_1+t\,a_2$. By the convexity of $A$ and $B$, we have $y\in B$ and $a\in A$. Further, $|y\,a|\le(1-t)|y_1\,a_1|+t|y_2\,a_2|<(1-t)r_1+t\,r_2=r$. From this we obtain $y\in U_r(A)$, and therefore, $y\in F_r$.

Thus, we have
$$
|x\,y|\le(1-t)|x\,y_1|+t|x\,y_2|<(1-t)f(x,r_1)+t\,f(x,r_2)+\varepsilon,
$$
therefore,
$$
f\bigl(x,(1-t)r_1+t\,r_2\bigr)=|x\,F_r|\le|x\,y|<(1-t)f(x,r_1)+t\,f(x, r_2)+\varepsilon.
$$
From the arbitrariness of $\varepsilon>0$ it follows that the mapping $f(x,r)$ is convex in $r$.
\end{proof}

\begin{thm}\label{Lipschitz}
Let $X$ be a real normed space, and $A, B\in \mathcal{P}_\conv(X).$ Suppose that for some $(Q,T)\subset\bigl(|A\,B|, \infty\bigr)$ and any $s_1, s_2\in(Q,T)$, it holds $d_H\bigl(F_{s_1}(A, B), F_{s_2}(A, B)\bigr)<\infty.$ Then, for any $q, t\in(Q,T)$, there exists $M>0$ such that the mapping $F_r(A, B)$ is $M$-Lipschitz in $r$ on $[q, t]$.
\end{thm}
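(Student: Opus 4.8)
The plan is to exploit the convexity of $r\mapsto\bigl|y\,F_r(A,B)\bigr|$ established in Proposition~\ref{conv_func}, combined with the monotonicity of the family $\{F_r\}$ and the finiteness hypothesis, to extract a single Lipschitz constant valid on $[q,t]$. First I would record two elementary facts. Since $B_{r_1}(A)\subset B_{r_2}(A)$ whenever $r_1\le r_2$, the sets $F_r=F_r(A,B)=B_r(A)\cap B$ form an increasing family, so $F_{r_1}\subset F_{r_2}$ for $r_1\le r_2$. Consequently the one-sided excess $\sup_{x\in F_{r_1}}\bigl|x\,F_{r_2}\bigr|$ vanishes, and directly from Definition~\ref{distance_H} one obtains, for $r_1\le r_2$,
$$
d_H\bigl(F_{r_1},F_{r_2}\bigr)=\sup_{y\in F_{r_2}}\bigl|y\,F_{r_1}\bigr|.
$$
Thus it suffices to bound $\bigl|y\,F_{r_1}\bigr|$ by $M\,(r_2-r_1)$ uniformly over $y\in F_{r_2}$.

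Next I would fix $q,t\in(Q,T)$ with $q<t$ and choose an auxiliary point $Q'$ with $Q<Q'<q$; this is possible because $(Q,T)$ is open, and it guarantees $Q'\in\bigl(|A\,B|,\infty\bigr)$. For a fixed $y\in F_{r_2}$ set $g(r)=\bigl|y\,F_r\bigr|=f(y,r)$, which by Proposition~\ref{conv_func} is convex in $r$, is nonnegative, and satisfies $g(r_2)=0$ since $y\in F_{r_2}$. Applying the monotone-secant-slope inequality for convex functions to the triple $Q'<r_1\le r_2$,
$$
\frac{g(r_1)-g(Q')}{r_1-Q'}\le\frac{g(r_2)-g(r_1)}{r_2-r_1}=\frac{-g(r_1)}{r_2-r_1},
$$
and rearranging gives $g(r_1)\,(r_2-Q')\le g(Q')\,(r_2-r_1)$, whence, using $r_2-Q'\ge q-Q'>0$,
$$
\bigl|y\,F_{r_1}\bigr|=g(r_1)\le\frac{g(Q')}{q-Q'}\,(r_2-r_1).
$$

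Finally I would make the constant uniform via the finiteness hypothesis. Since $r_2\le t$ we have $y\in F_{r_2}\subset F_t$, and since $Q'\le t$ the reduction above gives $g(Q')=\bigl|y\,F_{Q'}\bigr|\le\sup_{z\in F_t}\bigl|z\,F_{Q'}\bigr|=d_H\bigl(F_{Q'},F_t\bigr)=:D$. As $Q',t\in(Q,T)$, the hypothesis yields $D<\infty$. Taking the supremum over $y\in F_{r_2}$ then produces
$$
d_H\bigl(F_{r_1},F_{r_2}\bigr)\le M\,(r_2-r_1),\qquad M:=\frac{D}{q-Q'},
$$
for all $q\le r_1\le r_2\le t$, which is the claimed $M$-Lipschitz property on $[q,t]$.

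I expect the main obstacle to be \emph{uniformity}: Proposition~\ref{conv_func} controls each profile $g(\cdot)=\bigl|y\,F_\cdot\bigr|$ only separately, so the crux is to turn pointwise convexity into one slope bound holding simultaneously for every $y$. This is exactly what the extra anchor $Q'<q$ and the finiteness hypothesis achieve: the anchor converts ``non-decreasing secant slopes'' into a controlled rate of descent of $g$ toward its zero at $r_2$, while finiteness caps the boundary value $g(Q')$ uniformly by $D=d_H\bigl(F_{Q'},F_t\bigr)$. A minor point to verify carefully is the reduction of $d_H$ to the single excess term, which rests on the monotonicity $F_{r_1}\subset F_{r_2}$ and on $B_r(F_{r_1})$ being the closed ball of Definition~\ref{dfn:balls}.
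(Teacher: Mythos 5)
Your proof is correct and follows essentially the same route as the paper: both anchor at an auxiliary point strictly below $q$ (your $Q'$, the paper's $q'$), apply the convexity of $r\mapsto f(x,r)$ from Proposition~\ref{conv_func} together with $f(x,r_2)=0$ for $x\in F_{r_2}$, and cap the anchor value uniformly by $d_H\bigl(F_{Q'},F_t\bigr)<\infty$, arriving at the same constant $M=D/(q-Q')$. Your secant-slope inequality is just an algebraic reformulation of the paper's step of writing $r_1=(1-\delta)r_2+\delta Q'$ and using $f(x,r_1)\le\delta f(x,Q')$, so the two arguments coincide in substance.
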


\begin{proof}
For brevity, put $F_{\rho}:=F_{\rho}(A, B)$. Choose arbitrary $r,r'\in(Q,T)$, $r<r'$, and $q,q',t\in(Q,T)$ such that $Q<q'<q\le r<r'\le t<T$, then $d:=d_H(F_{q'},F_t)<\infty$. Since for each $\rho\in[q',t]$ we have $F_{q'}\subset F_\rho\subset F_t$, it holds $d_H(F_{q'},F_\rho) = \sup_{x\in F_\rho} |x\, F_{q'}|\le d$.

Since $r\in(q',r')$, there exists and unique $\delta\in (0, 1)$ such that $r=(1-\delta)r'+\delta q'$, hence $\delta=(r'-r)/(r'-q')<(r'-r)/(q-q')$. Put $M=d/(q - q')$.

By Proposition~\ref{conv_func}, for each $x\in F_{r'}$ it holds
$$
|x\,F_r|\le(1-\delta)|x\,F_{r'}|+\delta|x\,F_{q'}|=\delta|x\,F_{q'}|\le\delta d<\frac d{q-q'}(r'-r)=M(r'-r).
$$
Hence, for any $r,r'\in[q, t]$, $r<r'$, we obtain $d_H(F_r,F_{r'})=\sup_{x\in F_{r'}}|x\,F_r|\le M(r'-r)$, i.e., $F_r$ is $M$-Lipschitz in $r$ on $[q, t]$.
\end{proof}

Theorem~\ref{Lipschitz} directly implies the following

\begin{cor}\label{cont}
Let $X$ be a real normed space, and $A, B\in \mathcal{P}_\conv(X)$. Suppose that for some $(Q,T)\subset\bigl(|A\,B|,\infty\bigr)$ and any $s_1,s_2\in(Q,T)$, it holds $d_H\bigl(F_{s_1}(A, B),F_{s_2}(A, B)\bigr)<\infty$. Then, the mapping $F_r(A, B)$ is continuous in $r$ on $(Q,T)$.
\end{cor}

This leads to the following continuity criterion.

\begin{thm}[Continuity Criterion of $F_r$ in normed spaces]\label{cont_criterion}
Let $X$ be a real normed space, and $A,B\in\mathcal{P}_\conv(X)$. Then the mapping $F_r(A,B)$ is continuous in $r$ on some interval $(Q,T)\subset\bigl(|A\,B|,\infty\bigr)$ if and only if for any $s_1,s_2\in(Q,T)$, it holds
$$
d_H\bigl(F_{s_1}(A,B),F_{s_2}(A,B)\bigr)<\infty.
$$
\end{thm}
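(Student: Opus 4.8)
The plan is to prove the two implications of the criterion separately. The backward direction—that finiteness of $d_H\bigl(F_{s_1},F_{s_2}\bigr)$ for all $s_1,s_2\in(Q,T)$ forces continuity of $F_r$ on $(Q,T)$—requires no new work: it is exactly the statement of Corollary~\ref{cont}, so I would simply invoke that corollary. The content of the theorem therefore lies entirely in the forward direction, namely that continuity of $F_r$ on $(Q,T)$ already implies $d_H\bigl(F_{s_1}(A,B),F_{s_2}(A,B)\bigr)<\infty$ for \emph{every} pair $s_1,s_2\in(Q,T)$.

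For the forward direction I would argue by connectedness. First I would introduce on $(Q,T)$ the binary relation $s\sim s'$ defined by $d_H\bigl(F_s(A,B),F_{s'}(A,B)\bigr)<\infty$, and check that it is an equivalence relation: reflexivity holds since $d_H(F_s,F_s)=0$, symmetry holds because $d_H$ is symmetric, and transitivity follows from the triangle inequality for $d_H$ (both recorded after Definition~\ref{distance_H}), used in the form ``finite plus finite is finite''. Next I would show that each equivalence class is open in $(Q,T)$. Fixing $s_0$ and applying continuity of $F_r$ at $s_0$ with $\varepsilon=1$, there is $\delta>0$ such that $d_H\bigl(F_r,F_{s_0}\bigr)<1<\infty$ whenever $r\in(Q,T)$ and $|r-s_0|<\delta$; hence every such $r$ lies in the class of $s_0$, so the class contains a neighborhood of each of its points and is therefore open. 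Finally, since the classes partition $(Q,T)$ into pairwise disjoint nonempty open sets, the complement of any one class is a union of the remaining open classes and is thus also open, so each class is clopen. Connectedness of the interval $(Q,T)$ then forces a single equivalence class, whence any $s_1,s_2\in(Q,T)$ satisfy $d_H\bigl(F_{s_1},F_{s_2}\bigr)<\infty$, as required.

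I do not expect a deep obstacle here; the only genuine point is that continuity, by itself, delivers merely \emph{local} finiteness—finiteness of the distances to a fixed $F_{s_0}$ for $r$ near $s_0$—and the real task is to upgrade this local information to a global statement valid for all pairs. The equivalence-relation-plus-connectedness argument is precisely the device that bridges this gap, and the step I would be most careful to state cleanly is the openness of the equivalence classes, since that is where continuity is actually consumed.
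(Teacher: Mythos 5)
Your proposal is correct, and both directions are sound: the backward implication is handled exactly as in the paper, by citing Corollary~\ref{cont}. For the forward implication, however, you take a genuinely different route. The paper argues by compactness: continuity at each $s\in(Q,T)$ yields an interval $I_s$ on which $d_H\bigl(F_s,F_r\bigr)$ is finite; then, given $s_1<s_2$, it covers the compact segment $[s_1,s_2]$ by $\{I_s\}$, extracts a finite subcover $I_1,\ldots,I_n$ with consecutive intervals overlapping, chooses points $s_1=t_1<\cdots<t_n=s_2$ with $t_k\in I_k$, and chains finiteness through the triangle inequality. Your argument consumes continuity in precisely the same way---local finiteness, via $\varepsilon=1$---but replaces the finite-subcover bookkeeping with the observation that ``$d_H\bigl(F_s,F_{s'}\bigr)<\infty$'' is an equivalence relation whose classes are open, so that connectedness of $(Q,T)$ forces a single class. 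This is arguably a touch cleaner: the paper's step of arranging a subcover into a chain of pairwise-overlapping intervals with a monotone sequence of sample points is stated somewhat informally and requires a small combinatorial argument that your clopen-classes device sidesteps entirely. What the paper's version buys in exchange is an explicit finite chain between $s_1$ and $s_2$, in the same concrete spirit as the quantitative Lipschitz estimate of Theorem~\ref{Lipschitz} that the finiteness hypothesis feeds into. You were also right to flag the two points where care is needed: transitivity of your relation rests on the triangle inequality for the possibly-infinite-valued $d_H$ (recorded after Definition~\ref{distance_H}), and openness of the classes is exactly where continuity is used; both are handled correctly in your write-up.
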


\begin{proof}
If for any $s_1,s_2\in(Q,T)$ it holds $d_H\bigl(F_{s_1}(A,B),F_{s_2}(A,B)\bigr)<\infty$, then the mapping $F_r$ is continuous in $r$ on $(Q,T)$ by Corollary~\ref{cont}. Thus, it remains to prove the converse statement.

For brevity, put $F_{\rho}=F_{\rho}(A,B)$. Let $F_r$ be continuous on $(Q,T)$. This means that for each $s\in(Q,T)$ and any $\varepsilon>0$, there exists $\delta_s$ such that $I_s:=(s-\delta_s,s+\delta_s)\subset(Q,T)$ and for any $r\in I_s$ it holds $d_H(F_s,F_r)<\varepsilon$. This implies that for each $s\in(Q,T)$, the function $f_s(r)=d_H(F_s,F_r)$ is finite on the interval $I_s$.

Now we take an arbitrary $s_1,s_2\in(Q,T)$, say $s_1<s_2$, and consider the open covering $\{I_s\}_{s\in[s_1,s_2]}$. By compactness of $[s_1,s_2]$ we can take a finite subcovering $I_1,\ldots,I_n$ such that $I_{k-1}\cap I_k\ne\emptyset$ for each $k=2,\ldots,n$, and there exists a sequence $s_1=t_1<t_2<\cdots<t_n=s_2$ with $t_k\in I_k$. Since consecutive $I_k$ intersect each other, we get $d_H(F_{t_{k-1}},F_{t_k})<\infty$, and, by the triangle inequality, we obtain $d_H(F_{s_1},F_{s_2})<\infty$, that completes the proof.
\end{proof}

\subsubsection{Finiteness of Hausdorff distance between the images of $F_r$}\label{sec:finite_dist}
Note that, in general, the convexity of the sets $A$ and $B$ does not imply
$$
d_H\bigl(F_r(A, B), F_s(A,B)\bigr)<\infty
$$
for any $r,s\in\bigl(|A\,B|,\infty\bigr)$, $r\ne s$. The following example illustrates that.

\begin{exa}\label{ex:infinity_distance}
Fix in $\mathbb{R}^3$ the standard basis $e_1=(1,0,0)$, $e_2=(0,1,0)$, $e_3=(0,0,1)$. Let
$$
A=\bigl\{(x,y,z):z\le 0\bigr\}\ \ \text{and}\ \
B=\bigl\{(x,y,z):y\ge x^2/z,\,z>0\bigr\}\cup\bigl\{(0,y,0):y\ge 0\bigr\}.
$$
Analogically, for brevity, put $F_{\rho}:=F_{\rho}(A,B)$. Figure~\ref{parabolas} illustrates the sets $F_r$ for two different values of $r$.
\begin{figure}[h]
\centering
\includegraphics[width=0.6\textwidth]{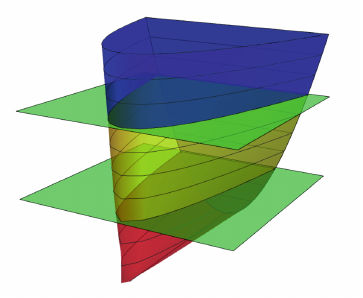}
\caption{$F_{r_1}$ is the red set, $F_{r_2}$ is the union of red and yellow sets}\label{parabolas}
\end{figure}

We show that $B$ is convex. Pass to the new basis by rotating the vectors $e_2$ and $e_3$ around the abscissa by $-\pi/4$. This transforms the quadratic form $x^2-yz$ into $x^2+y^2/2-z^2/2$, thus, in the new basis the inequality $x^2-yz\le0$ takes the form
$x^2+y^2/2-z^2/2\le0$. The solution of the latter one consists of two convex filled elliptic cones, one in the half-space $z\ge0$, and another one in $z\le0$.

Further, the inequality $z\ge0$ in the previous basis is transformed into $z\ge y$ in the new basis. It is easy to see that the intersection of the double cone $x^2+y^2/2-z^2/2\le0$ with the half-space $z\ge y$ yields the cone in $z\ge0$, that completes the proof of the convexity of $B$. The space $A$ is convex as each half-space.

Now we need to show that for any $r_1,\,r_2\in[0,\infty)$, $r_1<r_2$, the Hausdorff distance between $F_{r_1}$ and $F_{r_2}$ is infinite. Let $P_i:=\partial B_{r_i}(A)\cap B$ be the planar section of the set $B$ by the plane $\pi_i:=\partial B_{r_i}(A)$. Notice that $P_i$ is the epigraph of a parabola, and $d_H(F_{r_1},F_{r_2})=\sup_{u\in P_2}|u\,P_1|$. Denote by $P'_2$ the orthogonal projection of $P_2$ to the plane $\pi_1$. Clearly $\sup_{u\in P_2}|u\,P_1|\ge\sup_{u\in P'_2}|u\,P_1|$. It remains to note that $P'_2$ and $P_1$ are epigraphs of parabolas $p_1:=\bigl\{y=(1/r_1)x^2\bigr\}$ and $p'_2:=\bigl\{y=(1/r_2)x^2\bigr\}$ in the plane $z=r_1$, but
$$
\sup_{u\in P'_2}|u\,P_1|=\sup_{u\in p'_2}|u\,p_1|=\infty,
$$
because $r_1\ne r_2$. Thus, we get $d_H(F_{r_1},F_{r_2})=\infty$ for all admissible $r_1\ne r_2$.
\end{exa}

However, in the case of convex sets in real normed spaces of dimension not exceeding $2$, it is impossible to construct an example of the mapping $F$ for which there would exist $r,\,s\in\bigl(|A\,B|,\infty\bigr)$ with the condition $d_H(F_r,F_s)=\infty$. Namely, the following theorem holds.

\begin{thm}\label{thm_lem_2}
Let $X$ be a real normed space, dimension of $X$ does not exceed $2$, and $A,B\in\mathcal{P}_\conv(X)$. Then for any $r,\,s\in\bigl(|A\,B|,\infty\bigr)$, it holds
$$
d_H\bigl(F_r(A,B),F_s(A,B)\bigr)<\infty.
$$
\end{thm}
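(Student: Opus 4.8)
The plan is to reduce everything to a one-sided estimate at infinity and then exploit that, in dimension two, the slices transverse to a recession direction are one-dimensional intervals. Without loss of generality take $r<s$; since $r,s>|A\,B|$ both $F_r:=F_r(A,B)$ and $F_s:=F_s(A,B)$ are nonempty, and $F_r\subseteq F_s$, so $d_H(F_r,F_s)=\sup_{x\in F_s}|x\,F_r|$ and it suffices to produce a finite $M$ with $F_s\subseteq B_M(F_r)$. The function $g(x):=|x\,A|$ is convex and $1$-Lipschitz, and $F_\rho=B\cap\{g\le\rho\}$. The basic mechanism is a descent remark: if $x\in F_s$ has $g(x)>r$ and $a\in A$ is a (near-)nearest point, then moving straight toward $a$ gives $g\bigl(x+t(a-x)/g(x)\bigr)=g(x)-t$, so after travelling the distance $g(x)-r\le s-r$ one reaches level $r$; the resulting point lies in $B_r(A)$, and by Lemma~\ref{sum_1} ($B_s(A)=B_{s-r}(B_r(A))$) this is consistent with the fact that every point of $F_s$ is within $s-r$ of $B_r(A)$. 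Thus $|x\,F_r|$ can only be large when such descent segments leave $B$ before reaching level $r$, and the whole proof amounts to controlling this defect.

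Next I would pass to the behaviour at infinity. Suppose, for contradiction, that $\sup_{x\in F_s}|x\,F_r|=\infty$ and choose $x_i\in F_s$ with $|x_i\,F_r|\to\infty$; fixing any $z\in U_r(A)\cap B\subseteq F_r$ gives $|x_i\,F_r|\le|x_i-z|$, so $\|x_i\|\to\infty$, and since $X$ is finite-dimensional its unit sphere is compact, so after passing to a subsequence $x_i/\|x_i\|\to u$ for some unit vector $u$. I would then show $u\in\operatorname{rec}(\Cl A)\cap\operatorname{rec}(\Cl B)=:C$: the inclusion $x_i\in B$ handles $B$, while $g(x_i)\le s$ furnishes $a_i\in A$ with $\|x_i-a_i\|\le s+1$, so $A$ has points tending to infinity in the same limiting direction (Lemma~\ref{lem_invar} guarantees this direction is independent of basepoint, and the sliding of rays in Proposition~\ref{two_abz} upgrades ``points to infinity in a limiting direction'' to a genuine recession direction). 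Using Proposition~\ref{as:ray} one sees $\operatorname{rec}B_\rho(A)=\operatorname{rec}\Cl A$ for every $\rho$, whence $\operatorname{rec}F_r=\operatorname{rec}F_s=C$. If $u\in\Int C$, then far points in direction $u$ eventually lie in a translate $p+C\subseteq F_r$, forcing $|x_i\,F_r|$ to stay bounded, a contradiction; hence $u$ must lie on an extreme ray of $C$, and it is precisely this one-dimensional, boundary behaviour transverse to $u$ that remains to be controlled.

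The crux, and the only place where the hypothesis $\dim X\le2$ is used essentially, is this transverse control. Choose linear functionals $\psi,\phi$ with $\psi(u)=1$, $\phi(u)=0$, $\phi\ne0$; because $\dim X=2$ the pair $(\psi,\phi)$ is a coordinate system and each ``horizontal'' line $\{\psi=\tau\}$ is one-dimensional. For each large height $\tau$ the slices $F_r\cap\{\psi=\tau\}\subseteq F_s\cap\{\psi=\tau\}$ are nested closed intervals in the $\phi$-coordinate, and the goal is to bound, uniformly in $\tau$, the gap between their endpoints on the side toward which the sequence $x_i$ escapes. Each such endpoint lies either on $\partial B$ or on a level curve $\{g=\rho\}=\partial B_\rho(A)$. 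Endpoints lying on $\partial B$ are governed by the set $B$ common to $F_r$ and $F_s$; endpoints lying on $\partial B_s(A)$ versus $\partial B_r(A)$ are within norm-distance $s-r$ of each other by Lemma~\ref{sum_1}, and since along the recession direction these level curves become asymptotically parallel to $u$, that norm gap converts into a bounded gap in the $\phi$-coordinate. Carrying out this endpoint bookkeeping yields $F_s\subseteq B_M(F_r)$ with $M$ depending only on $s-r$ and the norm, contradicting $|x_i\,F_r|\to\infty$ and proving the theorem.

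I expect the endpoint bookkeeping to be the main obstacle, both technically (the case analysis over which constraint is active at each endpoint, and the passage from norm-distance to $\phi$-distance via asymptotic parallelism of the level curves) and conceptually: it is exactly here that two dimensions are indispensable, because only then is the transverse slice one-dimensional with just two endpoints, each pinned to a controlled boundary curve. In dimension three the transverse slice is a planar region which, as in Example~\ref{ex:infinity_distance}, can spread parabolically so that $F_s$ drifts arbitrarily far from $F_r$; there is no one-dimensional endpoint to pin down, and the distance becomes infinite.
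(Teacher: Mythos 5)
Your setup (contradiction, a sequence $x_i$ escaping to infinity, compactness of the unit sphere to extract a limiting direction, ray/recession arguments via Proposition~\ref{as:ray}, and the easy dichotomy when the limit direction lies in the interior of the common recession cone) matches the skeleton of the paper's proof, but the entire difficulty of the theorem is concentrated in what you defer as ``endpoint bookkeeping,'' and you do not carry it out; moreover, the one concrete mechanism you propose for it is flawed. Your dichotomy says that slice endpoints of $F_s$ on $\partial B$ are ``governed by the set $B$ common to $F_r$ and $F_s$,'' while endpoints on $\partial B_s(A)$ are within $s-r$ of $\partial B_r(A)$ by Lemma~\ref{sum_1}. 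But the dangerous configuration --- exactly the one of Figure~\ref{Examp :image} that makes $F_r$ expand distances --- is mixed: the slice of $F_s$ can end on $\partial B$ at a point $e$ with $|e\,A|\in(r,s]$. Such an endpoint is not in $F_r$, the common-boundary remark gives nothing, and Lemma~\ref{sum_1} only produces a point of $B_r(A)$ within $s-r$ of $e$ that may lie \emph{outside} $B$; to find a nearby point of $F_r=B_r(A)\cap B$ you must control where the level curve $\{x:|x\,A|=r\}$ re-enters $B$, i.e., the joint geometry of $\partial B$ and the level sets. Your conversion of the norm gap into a bounded $\phi$-gap rests on the assertion that the level curves become asymptotically parallel to $u$ on the relevant side, which is itself unproven: it requires showing that the ray in direction $u$ from a base point lies in $B_s(A)$ (the paper's Lemma~\ref{lem:l_in_M_r}, proved by a limiting argument with the segments $[a,x_i]$ together with Propositions~\ref{as:ray} and~\ref{two_abz}), and even granted, parallelism alone does not place the relevant descent path inside $B$.

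For comparison, the paper never bounds slice endpoints at all. It builds, inside $B$, the union $\Omega$ of trapezoids spanned by the chords $[x_{i-1},x_i]$ and their projections $h_i$ onto the limit ray $l_a$ (using the second half of Lemma~\ref{lem:l_in_M_r}, namely $[x_i,h_i)\subset B$), takes $p_i\in B_s(A)$ with $|x_i\,p_i|<\delta$ via Lemma~\ref{sum_1}, and then applies a genuinely two-dimensional crossing argument: the segment $[p_i,q_i]$ lies entirely in $B_s(A)$ by convexity (both endpoints do, since $l_a\subset B_s(A)$), and if $p_i\notin\Omega$ this segment must cross the polygonal line $x_0x_1x_2\cdots\subset B$, yielding a point $t_i\in B_s(A)\cap B=F_s$ whose distance to $x_i$ is bounded because the angles $\alpha_i\to0$ (Property~\ref{corner}). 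That crossing of a segment contained in $B_s(A)$ with a curve contained in $B$ is the step that actually uses $\dim X=2$, and it is the replacement for your unproven parallelism-plus-bookkeeping. In short: your strategy points in the right direction and its first half essentially reproduces the paper's, but the decisive step is missing, and the endpoint dichotomy you sketch in its place fails in the mixed case, so the proposal has a genuine gap.
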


\begin{proof}
Since zero-dimensional and one-dimensional spaces are subspaces of two-dimensional ones, it suffices to prove the theorem in the case when the dimension of the space is $2$.

So, let the dimension of $X$ be $2$. For brevity, put $F_{\rho}=F_{\rho}(A,B)$. Assume that there exist $r, s\in \bigl(|A\,B|, \infty\bigr)$, $r>s$, for which
\begin{equation}\label{fardist}
d_H(F_r, F_s)=\infty.
\end{equation}
Since $A,B\in\mathcal{P}_\conv(X)$, Lemma~\ref{lem:convexity} implies $F_r,F_s\in\mathcal{P}_\conv(X)$.

By~(\ref{fardist}), there exists a sequence
\begin{equation}\label{closed}
\{x_i\}\subset F_r\setminus F_s
\end{equation}
such that
\begin{equation}\label{inf}
|x_i\, F_s|\to\infty\ \ \text{as $i\to\infty$}.
\end{equation}
From~(\ref{closed}) we get $\{x_i\}\subset B$. At the same time, for all $i$ we have $x_i\not\in F_s=B_s(A)\cap B$ and, hence, $x_i\not\in B_s(A)$. Therefore,
\begin{equation}\label{balls}
\{x_i\}\subset B_r(A)\setminus B_s(A).
\end{equation}
In addition, by Lemma~\ref{sum_1}, it holds
\begin{equation}\label{nested_balls}
B_{r-s}\bigl(B_{s}(A)\bigr)=B_r(A).
\end{equation}

Let
\begin{equation}\label{point_a}
a\in U_s(A)\cap B\subset F_s.
\end{equation}
Since all properties of the sets $A$ and $B$ used in this proof are invariant under translations, without loss of generality we will further assume that the point $a$ coincides with the origin $0$ of the space $X$. By (\ref{balls}) and (\ref{point_a}), the sequence $\bigl\{x_i/\|x_i\|\bigr\}$ is correctly defined.

Due to the compactness of the unit sphere in each finite-dimensional space, there exists a subsequence $\bigl\{x_{i_j}/\|x_{i_j}\|\bigr\}$ converging to some point $y\ne0$. After removing elements and performing appropriate renumbering, we will further assume that the entire sequence $\bigl\{x_i/\|x_i\|\bigr\}$ converges to $y$.

\begin{lem}\label{lem:out_of_ray}
The ray $l_a:=\lambda y$, $\lambda\ge0$, contains at most finitely many elements from $\{x_i\}$.
\end{lem}

\begin{proof}
Assume that infinitely many elements from $\{x_i\}$ lie on the ray $l_a$. Then from~(\ref{inf}),~(\ref{balls}), and the convexity of $B_r(A)$, it follows that $l_a\subset B_r(A)$ and $l_a\not\subset B_s(A)$. But according to~(\ref{point_a}), the ray $l_a$ starts from $B_s(A)$, which cannot be by Proposition~\ref{as:ray} in view of~(\ref{nested_balls}) and the convexity of $B_s(A)$, a contradiction.
\end{proof}

To describe the geometric locus of points by means of angles, we fix some inner product in $X$, and choose an orthonormal basis $(e_1,e_2)$ such that the direction of the ray $l_a$ coincides with $e_1$. Recall that the \emph{oriented angle\/} between a vector $x\in X\setminus(-l_a)$ and the vector $e_1$ is the ordinary angle multiplied by $-1$ provided the coordinate of the vector $x$ with respect to $e_2$ is negative.

In what follows, we will frequently use the fact that any two norms on a finite-dimensional vector space are equivalent. In particular, if for a sequence of vectors one of the norms tend to infinity or to $0$, then the same holds for the second norm. Also, since such norms generate the same topology, then continuous dependance holds or not w.r.t. the both norms simultaneously.

Since $x_i/\|x_i\|\to y$ as $i\to\infty$, then, taking into account Lemma~\ref{lem:out_of_ray}, we come to the following agreement.
\begin{property}\label{corner}
Passing if necessarily to a subsequence, we further assume that all oriented angles $\alpha_i$ between the vectors $x_i$ and the ray $l_a$ (i.e., the vector $e_1$)
\begin{itemize}
\item are positive and belong to the interval $(0,\pi/2)$ (we change the direction of $e_2$ if necessarily),
\item strictly monotonically decreasing in $i$ and tend to $0$.
\end{itemize}
Also, if $x_i=x_i^1e_1+x_i^2e_2$, then, taking into account~(\ref{inf}), we assume that $x_i^1$, the norm $\|x_i\|$, and $\sqrt{(x_i^1)^2+(x_i^2)^2}$ are strictly monotonically increasing and tend to $\infty$. Moreover, we put $x_0=a$, fix some $\varepsilon>0$, and demand that for each $i=1,2,\ldots$, it holds $x_i^1-x_{i-1}^1>r-s+\varepsilon=:\delta$.
\end{property}

\begin{lem}\label{lem:l_in_M_r}
Under restrictions introduced above, we have $l_a\subset B_s(A)$. Moreover, for $h_i:=x_i^1e_1$ it holds $[x_i,h_i)\subset B$.
\end{lem}

\begin{proof}
Let $v\in l_a$ and $v\ne0$. Draw the line $l'$ perpendicular to $l_a$ through the point $v$. Since the dimension of $X$ equals $2$, Property~\ref{corner} implies that the line $l'$ intersects infinitely many segments $[a,x_i]$. For those $x_i$ with $[a,x_i]\cap l'\ne\emptyset$, we put
\begin{equation}\label{sj}
v_i= [a,x_i]\cap l',
\end{equation}
see Figure~\ref{lines}.
\begin{figure}[h]
\centering
\includegraphics[width=0.4\textwidth]{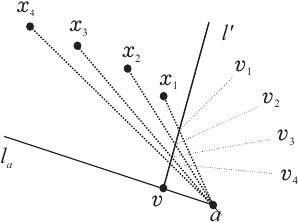}
\caption{Sequence $\{v_i\}$}\label{lines}
\end{figure}
By Property~\ref{corner}, $v_i\to v$ as $i\to \infty$. Due to the convexity of $F_r$, from~(\ref{closed}) and~(\ref{point_a}) it follows that $[a,x_i]\subset F_r$ for all $i$. Hence, by~(\ref{sj}), $\{v_i\}\subset B_r(A)$ and $\{v_i\}\subset B$. Since $B_r(A)$ is closed, $v\in B_r(A)$. The arbitrariness of $v\in l_a$, $v\ne a$, and $a\in B_r(A)$, imply
\begin{equation}\label{including}
l_a\subset B_r(A).
\end{equation}

Remind that the ray $l_a$ starts from $a\in B_s(A)$. If $l_a\not\subset B_s(A)$, then from~(\ref{nested_balls}) and the convexity of $B_s(A)$, by Proposition~\ref{as:ray} we have $l_a\not\subset B_r(A)$, which contradicts~(\ref{including}). Hence, $l_a\subset B_s(A)$.

To prove the second statement, we take $h_i$ as $v$, then all segments $[x_i,v_i]$ belong to $B$ by convexity reasons, thus $[x_i,h_i)\subset B$ because $v_i\to h_i$.
\end{proof}

Recall that $a\in U_s(A)$ by~(\ref{point_a}), and $x_1$ does not belong to the line $l$ passing through the ray $l_a$ by Property~\ref{corner}, thus $(a,x_1)\cap U_s(A)\ne\emptyset$, hence there exists $q\in(a,x_1)\cap U_s(A)$. Denote by $l_q$ the ray codirected with $l_a$ and starting at $q$. Since $l_a\subset B_s(A)$ by Lemma~\ref{lem:l_in_M_r}, and $B_s(A)$ is closed, we get $l_q\subset B_s(A)$ by Proposition~\ref{two_abz}.

Denote by $T_i$ the trapezoid $x_{i-1}x_ih_ih_{i-1}$ (if $i=1$ this trapezoid degenerates into the triangle $ax_1h_1$). From Lemma~\ref{lem:l_in_M_r} and convexity of $B$, we get $T_i\setminus l_a\subset B$. Denote by $T$ the union of all this trapezoids, thus $\Omega:=T\setminus l_a\subset B$.

Since $x_i\in B_r(A)$, and $B_r(A)=B_{r-s}\bigl(B_s(A)\bigr)$, for each $i$ there exists $p_i\in B_s(A)$ such that $|x_i\,p_i|<\delta$. Since $l_q\subset B_s(A)$, we can always choose $p_i$ from the same open half-plane bounded by $l$ where all $x_i$ are placed. Indeed, if $|x_i\,l_q|<\delta$, we can take $p_i$ from $l_q$.

Let $p_i=p_i^1e_1+p_i^2e_2$ and $q_i=p_i^1e_1$. Since for all $i\ge1$ we have $x_i^1-x_{i-1}^1>\delta$ by Property~\ref{corner}, we get $p_i^1>0$, i.e., $q_i\in l_a\setminus\{a\}$. Also, we have chosen $p_i$ such that $p_i^2>0$ for all $i\ge1$. Since both $p_i$ and $l_a$ belong to $B_s(A)$, the convexity of $B_s(A)$ implies $[p_i,q_i]\subset B_s(A)$ for $i\ge1$. Since $p_i^1>0$ and $p_i^2>0$ for $i\ge1$, the segment $[p_i,q_i]$ intersects $\Omega$. 

If $p_i\in \Omega$ then $p_i\subset B_s(A)\cap B=F_s$ and so we get $|x_i\, F_s| < \delta.$ If $p_i\notin \Omega$ then the segment $[p_i,q_i]\subset B_s(A)$ intersects the infinite polygonal line $x_0x_1x_2x_3\cdots\subset \partial\Omega\subset B$ at some point $t_i\in[x_{i-1},x_i]\cup[x_i,x_{i+1}]$, see Figure~\ref{ball}.
\begin{figure}[h]
\centering
\includegraphics[width=0.5\textwidth]{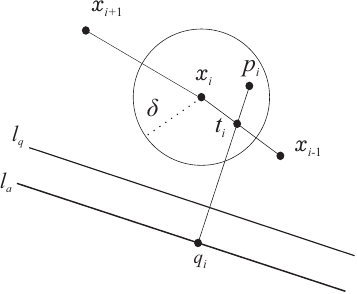}
\caption{Point $t_i$}\label{ball}
\end{figure}
Hence, $t_i\in F_s.$ Also due to Property~\ref{corner}, the angle between the segment $[x_i, x_{i+1}]$ and the ray $l_a$ tends to zero and so we obtain that $|p_i\,t_i|$ is bounded. This fact implies that $|x_i\, F_s|$ is bounded, since $|x_i\,p_i|<\delta$.

Thus, the value $|x_i\, F_s|$ is bounded for arbitrary $i$, that contradicts~(\ref{inf}). The proof of Theorem~\ref{thm_lem_2} is completed.
\end{proof}

From Corollary~\ref{cont} and Theorem~\ref{thm_lem_2} the following corollary directly follows.

\begin{cor}\label{cor:without_fin_dist}
Let $X$ be a real normed space, dimension of $X$ does not exceed $2$ and $A,B\in \mathcal{P}_\conv(X)$. Then the mapping $F_r(A, B)$ is continuous in $r$.
\end{cor}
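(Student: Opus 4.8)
The plan is to obtain this corollary as an immediate consequence of the two principal results already at our disposal: Theorem~\ref{thm_lem_2}, which guarantees finiteness of the Hausdorff distances between the images of $F_r$, and Corollary~\ref{cont}, which converts such finiteness into continuity. No new geometric work is required; the entire substance has been absorbed into the proof of Theorem~\ref{thm_lem_2}, so the task here is purely one of assembling the pieces.

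First I would recall that the mapping $r\mapsto F_r(A,B)$ is defined on the open interval $\bigl(|A\,B|,\infty\bigr)$ and that continuity in $r$ is understood with respect to the topology generated by $d_H$. Then I would invoke Theorem~\ref{thm_lem_2}: since $\dim X\le 2$ and $A,B\in\mathcal{P}_\conv(X)$, for every pair $r,s\in\bigl(|A\,B|,\infty\bigr)$ we have $d_H\bigl(F_r(A,B),F_s(A,B)\bigr)<\infty$. The key observation is that this holds not on some proper subinterval only, but on the whole domain, so the finiteness hypothesis required by Corollary~\ref{cont} is available in its strongest form.

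Second, I would apply Corollary~\ref{cont} with the choice $(Q,T)=\bigl(|A\,B|,\infty\bigr)$, which is a legitimate instance of the admissible intervals $(Q,T)\subset\bigl(|A\,B|,\infty\bigr)$ appearing in its statement. For every $s_1,s_2\in(Q,T)$ the distance $d_H\bigl(F_{s_1}(A,B),F_{s_2}(A,B)\bigr)$ is finite by the previous step, so the conclusion of Corollary~\ref{cont} yields continuity of $F_r(A,B)$ in $r$ on all of $\bigl(|A\,B|,\infty\bigr)$, which is exactly the assertion. Alternatively, one could cover the domain by an increasing family of bounded intervals, apply Corollary~\ref{cont} on each, and glue the resulting local continuity, but this is superfluous since the domain is itself a single admissible interval.

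I do not expect a genuine obstacle here: the only point demanding any care is matching the hypotheses, namely verifying that Theorem~\ref{thm_lem_2} indeed supplies finiteness for \emph{every} admissible pair $s_1,s_2$ and that the full domain qualifies as a value of $(Q,T)$, both of which are immediate. The substantive estimate was the content of Theorem~\ref{thm_lem_2}, and this corollary is merely its combination with Corollary~\ref{cont}.
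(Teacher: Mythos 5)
Your proposal is correct and matches the paper exactly: the paper derives Corollary~\ref{cor:without_fin_dist} directly from Theorem~\ref{thm_lem_2} (finiteness of $d_H$ between images of $F_r$ in dimension at most $2$) combined with Corollary~\ref{cont}, applied on the full interval $\bigl(|A\,B|,\infty\bigr)$. Your verification that the whole domain qualifies as an admissible $(Q,T)$ is the only point of care, and you handled it correctly.
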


\section{Conclusion}
This work studies the continuous dependence on $r \in \bigl[|A\,B|, \infty\bigr)$ of the mapping $F_r(A,B) = B_r(A) \cap B$ in metric and normed spaces, where $A$ and $B$ are nonempty subsets of these spaces. Such a mapping arises in geometric optimization problems with the Hausdorff metric, for example, when searching for subsets that minimize the sum of distances to given elements of the hyperspace $\mathcal{H}(X)$ over the space $X$, see~\cite{Gals_1}.

Furthermore, it would also be interesting to investigate the operation $F_r(A,B)$ with respect to continuous dependence on the subsets $A$ and $B$. Specifically, fix a radius $r \in \bigl(|A\,B|, \infty\bigr).$ The question is: will the mapping $F_r(A,B)$ be continuous at the point $(A, B) \in \mathcal{H}(X) \times \mathcal{H}(X)$, and if so, under what conditions and in what spaces? This question is the subject of possible future research.

\backmatter

\bmhead{Acknowledgements}
The work of Galstyan A.Kh. was carried out at Harbin Institute of Technology with the support of the Postdoctoral Research Start-up Funds (Funding card number: AUGA5710026125).

The work of Tuzhilin A.A. was supported by grant No. 25-21-00152 of the Russian Science Foundation, by National Key R\&D Program of China (Grant No. 2020YFE0204200), as well as by the Sino-Russian Mathematical Center at Peking University. Partial of the work by Tuzhilin A.A. were done in Sino-Russian Math. center, and he thanks the Math. Center for the invitation and the hospitality.



\end{document}